\newtheorem{theorem}{Theorem}[section]
\newtheorem{definition}[theorem]{Definition}
\newtheorem{proposition}[theorem]{Proposition}
\newtheorem{corollary}[theorem]{Corollary}
\newtheorem{lemma}[theorem]{Lemma}
\newtheorem{remark}[theorem]{Remark}
\newtheorem{example}[theorem]{Example}
\begin{document}

\title{On symmetric decompositions of positive operators}
\author{Maria Anastasia Jivulescu}
\address{MAJ: Department of Mathematics,
Politehnica University of Timi\c soara,
Victoriei Square 2, 300006 Timi\c soara, Romania}
\email{maria.jivulescu@upt.ro}

\author{Ion Nechita}
\address{IN: Zentrum Mathematik, M5, Technische Universit\"at M\"unchen, Boltzmannstrasse 3, 85748 Garching, Germany
and CNRS, Laboratoire de Physique Th\'{e}orique, IRSAMC, Universit\'{e} de Toulouse, UPS, F-31062 Toulouse, France}
\email{nechita@irsamc.ups-tlse.fr}

\author{Pa\c{s}c G\u{a}vru\c{t}a}
\address{PG: Department of Mathematics,
Politehnica University of Timi\c soara,
Victoriei Square 2, 300006 Timi\c soara, Romania}
\email{pgavruta@gmail.com}

\subjclass[2000]{}
\keywords{}

\begin{abstract}
Inspired by some problems in Quantum Information Theory, we present some results concerning decompositions of positive operators acting on finite dimensional Hilbert spaces. We focus on decompositions by families having geometrical symmetry with respect to the Euclidean scalar product and we characterize all such decompositions, comparing our results with the case of SIC--POVMs from Quantum Information Theory. We also generalize some Welch--type inequalities from the literature.
\end{abstract}

\date{\today}

\maketitle

\tableofcontents

\section{Preliminaries}
We study different issues related to the decomposition of a positive operator (i.e.~positive semidefinite matrix) on  a $d$-dimensional complex Hilbert space $\mathcal{H}$, in analogy to the properties of positive-operator valued measures (POVMs) \cite{teiko-book, wol} and frames \cite{ cku, chi, hklw}. We recall that, given a positive operator  $T$ on $\mathcal{H}$ and a family $\mathcal{E}=\{E_1,\ldots, E_N\}$ of positive operators on $\mathcal{H}$, we say that   $\mathcal{E} $ is a \textit{decomposition} of the operator $T$ if we have
$T=\sum_{i=1}^N E_i$. The family $\mathcal{E}$ is called POVM in the case when $T$ is the identity operator on $\mathcal{H}$. POVMs are the most general notion of measurement in quantum theory, and have received a lot of attention in recent years, especially from the Quantum Information Theory community. One of our main goals in this paper is to generalize some of the known results for POVMs to decompositions of arbitrary (positive) self-adjoint operators $T$. 

We focus on the question of decomposing a positive operator $T$ by a symmetric family of positive operators $\mathcal{E}=\{E_1,\dots, E_N\}$. The symmetry of the family $\mathcal{E}$ refers to the geometry of the elements in the Euclidean space of self-adjoint operators: we require the Hilbert-Schmidt scalar products  of all pairs of the elements of the family $\mathcal{E}$ to be the same
$$ \langle E_i,E_j \rangle_{HS} = \operatorname{Tr}[E_i E_j] = a \delta_{ij} + b(1-\delta_{ij}), \qquad \forall  i,j.$$
For a symmetric family, we denote by $a:=\operatorname{Tr}[E_i^2]$ and $b:=\operatorname{Tr}[E_iE_j]$ ($i\neq j$) its symmetry parameters. This type of problem has been  recently asked in the framework of Quantum Information Theory,  for basic connections of this field with operator theory see \cite{gupta}.  A decomposition of the identity by a symmetric family of $N=d^2$ linearly independent operators is called a  \emph{symmetric-informationally complete} POVM (shortly SIC--POVM). Construction of all general SIC--POVMs has been recently  achieved in  the papers \cite{appleby} and \cite{gour}.  With regard to the applications, a particularly difficult problem is the existence of SIC--POVMs whose elements are proportional to rank 1 projections. It is still an open question  if rank 1 SIC--POVMs exist in any dimension; examples have been found for $d=1,\dots, 16,19,24,35,48$ (ananlytical proofs) and $d\leq 67$  (numerically evidence) \cite{scott}. The closeness  of general SIC--POVM to a rank 1 SIC--POVM has been recently quantified \cite{gour} using the parameter $a$ that characterizes the symmetric family. We follow the same path of investigation for decompositions of arbitrary operators $T$ and we give bounds for the symmetry parameter $a$. Our motivation is to achieve a better understanding of the more general situation, with the hope that this will shed some light on the more interesting case of unit rank SIC-POVMs. 

The paper is organized as follows.
In Section \ref{decomp} we gather some relatively straightforward general properties of decompositions, proving that a local decomposition for an injective operator is essentially a global one, and characterizing decompositions of orthogonal projections. In Sections \ref{sec:symmetric-decompositions}  and \ref{4} we focus on symmetric decompositions of general, and then positive operators; these sections contain the main result of the paper, a characterization of all symmetric (positive) decompositions of a given (positive) operator. Finally, Section \ref{5} contains some generalization of weighted Welch--type inequalities.

\bigskip 

\noindent \textit{Acknowledgments.} The work of M.A.J. and P.G. was supported by a grant of the Romanian National Authority for Scientific Research, CNCS-UEFISCDI, project number  PN-II-ID-JRP-RO-FR-2011-2-0007.
I.N.'s research has been supported by a von Humboldt fellowship and by the ANR projects {RMTQIT}  {ANR-12-IS01-0001-01} and {StoQ} {ANR-14-CE25-0003-01}.

\section{Properties of general decompositions}
\label{decomp}
In this paper, we study operators acting on a Hilbert space $\mathcal H$, which will be finite dimensional (with the exception of Proposition \ref{prop:local-decompositions}) and complex (unless otherwise specified). Our focus will be on the existence of \emph{decompositions} of operators as sums of families having some specific symmetry or positivity properties.

\begin{definition}
Let be $\mathcal{E}=\{E_i\}_{i=1}^N$ a family of self-adjoint operators acting on some Hilbert space $\mathcal H$, and $T$ another given operator acting on $\mathcal H$. We say that $\mathcal E$ is a \emph{decomposition} of $T$ if $\sum_{i=1}^N E_i = T$. If the operators $E_i$ are positive (i.e.~$E_i$ are positive semidefinite matrices), the family $\mathcal E$ is called \emph{positive}.

We say that $\mathcal{E}$ is a \emph{local decomposition} of $T$ if for any $x\in\mathcal{H}$, $x\neq 0$, there is a complex number $\alpha(x)\neq 0$ such that
\begin{equation}\label{alfa}
\sum_{i=1}^N E_i x=\alpha(x)Tx.
\end{equation}
\end{definition}

\begin{proposition}
\label{prop:local-decompositions}
Let $\mathcal H$ be any (possibly infinite dimensional) complex Hilbert space, and $T$ an operator acting on $\mathcal H$. If $T$ is injective and $\mathcal{E}$ is a local decomposition of $T$, then there exists $\beta \in \mathbb C$ such that $\mathcal E$ is a decomposition of $\beta T$.

\end{proposition}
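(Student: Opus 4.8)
The plan is to reduce everything to showing that the scalar $\alpha(x)$ appearing in the local decomposition is in fact independent of $x$. Writing $S := \sum_{i=1}^N E_i$, the hypothesis reads $Sx = \alpha(x)\,Tx$ for every $x \neq 0$. Since $T$ is injective, $Tx \neq 0$ whenever $x \neq 0$, so $\alpha(x)$ is \emph{uniquely} determined by $x$; the whole content of the proposition is then that this well-defined function $\alpha$ is constant, for once we set $\beta$ equal to that common value we obtain $S = \beta T$, which is exactly the assertion that $\mathcal E$ is a decomposition of $\beta T$.

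First I would record the elementary but crucial consequence of injectivity: $T$ sends linearly independent vectors to linearly independent vectors. Indeed, if $x,y$ are independent yet $c_1 Tx + c_2 Ty = 0$ nontrivially, then $T(c_1 x + c_2 y)=0$ forces $c_1 x + c_2 y = 0$ by injectivity, a contradiction. The core step is then an additivity argument: for any two linearly independent $x,y$, applying the defining relation to $x$, $y$, and $x+y$ separately and using linearity of $S$ gives
\[
\alpha(x)\,Tx + \alpha(y)\,Ty = S(x+y) = \alpha(x+y)\,Tx + \alpha(x+y)\,Ty,
\]
so that $\bigl(\alpha(x)-\alpha(x+y)\bigr)Tx + \bigl(\alpha(y)-\alpha(x+y)\bigr)Ty = 0$. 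Since $Tx$ and $Ty$ are linearly independent, both coefficients must vanish, whence $\alpha(x) = \alpha(y) = \alpha(x+y)$.

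This already equalizes $\alpha$ on every pair of independent vectors, so it remains only to treat a pair of nonzero collinear vectors $y = \lambda x$; here homogeneity settles it directly, since $Sy = \lambda\,Sx = \lambda\alpha(x)\,Tx = \alpha(x)\,Ty$ forces $\alpha(y)=\alpha(x)$ (again because $Ty \neq 0$). Combining the two cases shows that any two nonzero vectors have the same $\alpha$-value, so $\alpha$ is a single constant $\beta$ on $\mathcal H\setminus\{0\}$, and $Sx = \beta Tx$ for all $x$ (trivially also at $x=0$), giving $S=\beta T$ as desired.

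I expect the argument to be essentially obstacle-free; the only points deserving care are the degenerate configurations, namely the collinear pairs handled above by homogeneity and the trivial one-dimensional case, where every nonzero pair is collinear and homogeneity alone suffices. It is worth noting that neither self-adjointness nor positivity of the $E_i$ is actually used: the statement follows purely from linearity of $S$ together with injectivity of $T$, which is why the finiteness of $N$ — ensuring $S$ is a genuine everywhere-defined linear operator — is the only feature of the setting that matters, and the proof goes through verbatim in the infinite-dimensional case.
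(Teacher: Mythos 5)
Your argument is correct and follows essentially the same route as the paper: equalize $\alpha$ on linearly independent pairs via the relation at $x$, $y$, and $x+y$, then handle collinear pairs by homogeneity. The only cosmetic difference is that you invoke injectivity through the preservation of linear independence of $Tx$, $Ty$, whereas the paper cancels $T$ to get a relation among $x$, $y$ themselves; these are interchangeable.
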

\begin{proof} 
We first consider $x,y$ linearly independent and we show that $\alpha(x)=\alpha(y)$. By linearity, we have
$\alpha(x+y)T(x+y)=\alpha(x)Tx+\alpha(y)Ty$ and since $T$ is injective, we get $\alpha(x+y)(x+y)=\alpha(x)x+\alpha(y)y$. Using the linear independence of $x$ and $y$, together with
$[\alpha(x+y)-\alpha(x)]x+[\alpha(x+y)-\alpha(y)]y=0$, we conclude that $\alpha(x)=\alpha(y) = \alpha(x+y)$.

Let now $x,y\neq 0, y=\lambda x,\lambda\neq 0$. 
We have $\sum E_i x=\alpha(x)Tx$, $\sum E_i y=\alpha(y)Ty\Leftrightarrow \sum E_i(\lambda x)=\alpha(\lambda x)T(\lambda x)$, hence $\sum E_i x=\alpha(\lambda x)Tx$, so $\alpha(x)Tx=\alpha(\lambda x)Tx \Rightarrow \alpha(\lambda x)=\alpha(x)$, as $T$ is injective. This shows that the function $\alpha$ is constant, finishing the proof.
\end{proof}
\begin{remark}
If the operator $T$ and the decomposition $\mathcal E$ are positive, then the scalar $\beta$ is non-negative, $\beta \geq 0$.
\end{remark}
\begin{remark}
In the particular case when the Hilbert space $\mathcal H$ is finite dimensional, we can prove the above result in the following way. The operator $T$ was assumed injective, hence it is invertible, and we have
$$\forall x \neq 0, \qquad T^{-1}\left( \sum_{i=1}^N E_i \right) x = \alpha(x) x.$$
Hence, every non-zero vector $x \in \mathcal H$ is an eigenvector of $T^{-1}\sum_i E_i$; this cannot happen unless $T^{-1}\sum_i E_i$ has just one eigenvalue (eigenspace), i.e.~it is a multiple of the identity. 
\end{remark}

Let $T$ be a self-adjoint operator. For any given scalar weights $(t_1, \ldots, t_N)$ such that $\sum_{i=1}^N t_i=1$, it is clear that $\{t_iT\}$ is a decomposition of $T$ (if $T$ is positive and one assumes $t_i \geq 0$ for all $i$, the decomposition is positive). Such a decomposition is called \emph{degenerate}. We prove that if $\operatorname{rk}[T]=1$, then $T$ has only this type of decomposition.

\begin{proposition}
Let $T$ be a positive operator acting on a finite dimensional complex Hilbert space $\mathcal H$, and $\mathcal{E}=\{E_i\}_{i=1}^N$ a decomposition of $T$. If $\mathrm{rk}[T]=1$, then $E_i=t_iT$, $0\leq t_i\leq 1$ for $i\in\{1,2,\ldots, N\}$ and $\sum_{i=1}^N t_i=1$.
\end{proposition}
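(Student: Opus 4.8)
The plan is to combine the rank-one structure of $T$ with the positivity of the family $\mathcal E$. Since $T$ is positive with $\operatorname{rk}[T]=1$, I would first write $T=\lambda P$, where $\lambda>0$ and $P$ is the orthogonal projection onto the line $\mathbb{C}v$ spanned by a unit vector $v$ that generates the range of $T$. In particular $\ker T = v^{\perp}$, the orthogonal complement of $v$, which has dimension $d-1$. The goal is to show that each $E_i$ is forced to live on the same line.

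The heart of the argument is to prove that every $E_i$ annihilates $\ker T$. For any $x\perp v$ we have $\langle x, Tx\rangle = 0$, and since $\mathcal E$ decomposes $T$ this reads $\sum_{i=1}^N \langle x, E_i x\rangle = 0$. Each summand is non-negative because $E_i\ge 0$, so a vanishing sum of non-negative reals forces every term to vanish, giving $\langle x, E_i x\rangle = 0$ for all $i$. Positivity of $E_i$ then yields $E_i^{1/2}x=0$, hence $E_i x = 0$. Thus $v^{\perp}\subseteq \ker E_i$, and since $E_i$ is self-adjoint its range equals $(\ker E_i)^{\perp}\subseteq (v^{\perp})^{\perp}=\mathbb{C}v$. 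This positivity-to-kernel step is the conceptual crux: the single scalar identity $\langle x,Tx\rangle=0$ together with positivity pins down the behaviour of each $E_i$ on the whole kernel of $T$. It is not technically difficult, but it is where the hypotheses $E_i\ge 0$ and $\operatorname{rk}[T]=1$ are genuinely used.

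Having located the range, the conclusion is routine linear algebra. Since $\operatorname{range}(E_i)\subseteq \mathbb{C}v$ we have $E_i v = c_i v$ for some scalar $c_i$, while $E_i$ annihilates $v^{\perp}$; together these identities give $E_i = c_i P = (c_i/\lambda)\,T$. Setting $t_i := c_i/\lambda$, positivity of $E_i$ forces $c_i = \langle v, E_i v\rangle \ge 0$, so $t_i\ge 0$. Finally, summing over $i$ and using that $\mathcal E$ decomposes $T$ gives $\bigl(\sum_i t_i\bigr)T = \sum_i E_i = T$; since $T\neq 0$ this yields $\sum_i t_i = 1$, and as all $t_i$ are non-negative and sum to one we get $0\le t_i\le 1$, completing the proof.
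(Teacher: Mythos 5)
Your proof is correct. The paper's own argument is shorter but less self-contained: it observes that $T=\sum_j E_j$ together with positivity of the $E_j$ gives $0\le E_i\le T$ for each $i$, and then invokes a cited result (Proposition 1.63 of the Heinosaari--Ziman book) stating that a positive operator dominated by a rank-one positive operator $T$ must be a multiple $t_iT$ with $0\le t_i\le 1$ --- this is exactly the statement that rank-one operators generate the extremal rays of the positive semidefinite cone. What you have done is prove that cited fact from scratch: writing $T=\lambda P$ with $P=vv^*$, using $\sum_i\langle x,E_ix\rangle=\langle x,Tx\rangle=0$ for $x\perp v$ to force $\langle x,E_ix\rangle=0$, and then the standard step $\langle x,E_ix\rangle=0\Rightarrow E_i^{1/2}x=0\Rightarrow E_ix=0$ to pin down $\ker E_i\supseteq v^\perp$ and hence $\operatorname{range}(E_i)\subseteq\mathbb{C}v$. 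The two routes are the same in spirit (both exploit extremality of unit-rank positive operators), but yours has the advantage of being elementary and self-contained, while the paper's buys brevity at the cost of an external reference. One small remark: the proposition as stated does not say the word ``positive'' for the decomposition $\mathcal E$, but both your argument and the paper's implicitly require $E_i\ge 0$ (and the conclusion $t_i\ge 0$ would fail otherwise), so you are reading the hypothesis the same way the authors intend.
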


\begin{proof}
This is an easy consequence of the fact that the extremal rays of the positive semidefinite cone are unit rank projections. More precisely, from $T=\sum_{i=1}^N E_i$, it follows that $T\geq E_i, \forall i$. We apply Proposition (1.63), from \cite{teiko-book}. It follows that $E_i=t_iT$, for $0\leq t_i\leq 1$, $i=1,2\ldots N$, hence $T=(\sum_{i=1}^N t_i)T$. Since $\operatorname{rk}[T]=1$, it follows that there is $x\in\mathcal{H}$ such that $Tx\neq 0$. Hence, $\sum_{i=1}^N t_i=1$. 
\end{proof}

We characterize next decompositions of self-adjoint projections. 

\begin{proposition}\label{proj}
Let $\mathcal{E}$ be a positive decomposition of a positive operator $T$. Then, the following are equivalent:
\begin{enumerate}[label=\roman*)]
\item $T$ is projection
\item $TE_i=E_iT=E_i$, for all $i$.
\end{enumerate} 
\end{proposition}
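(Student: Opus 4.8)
The plan is to prove the two implications separately, treating (ii) $\Rightarrow$ (i) first since it is immediate. Assuming $TE_i = E_i$ for every $i$ and summing over $i$, I obtain $T\sum_i E_i = \sum_i E_i$, that is $T^2 = T$. A self-adjoint idempotent is necessarily an orthogonal projection, so $T$ is a projection and (i) follows. (The hypotheses $E_iT = E_i$ are then redundant for this direction but are recovered by symmetry.)

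For the substantive direction (i) $\Rightarrow$ (ii), the key step is to show that the positivity of the decomposition forces $\ker T \subseteq \ker E_i$ for every $i$. I would argue as follows: if $x \in \ker T$, then
$$ 0 = \langle x, Tx\rangle = \sum_{i=1}^N \langle x, E_i x\rangle. $$
Each summand $\langle x, E_i x\rangle$ is non-negative because $E_i \geq 0$, so a sum of non-negative numbers vanishing forces every term to vanish, giving $\langle x, E_i x\rangle = \|E_i^{1/2} x\|^2 = 0$ and hence $E_i x = 0$. This is exactly the place where the hypothesis that $\mathcal E$ is a \emph{positive} decomposition is indispensable, and I expect it to be the main obstacle in the sense that the statement is false without it.

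Once the kernel inclusion is established, I would exploit the fact that, since $T$ is a projection, the space splits orthogonally as $\mathcal H = \operatorname{ran} T \oplus \ker T$ with $T$ acting as the identity on $\operatorname{ran} T$ and as zero on $\ker T$. Writing an arbitrary $x = x_0 + x_1$ with $x_0 \in \operatorname{ran} T$ and $x_1 \in \ker T$, I have $Tx = x_0$, so $E_i T x = E_i x_0$, while $E_i x = E_i x_0 + E_i x_1 = E_i x_0$ by the kernel inclusion. Thus $E_i T x = E_i x$ for all $x$, i.e.~$E_i T = E_i$. Finally, taking adjoints and using that $E_i$ and $T$ are self-adjoint yields $TE_i = (E_iT)^* = E_i^* = E_i$, which completes the proof of (ii).
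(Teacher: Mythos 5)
Your proof is correct. The direction $ii)\Rightarrow i)$ is exactly the paper's argument (sum the identities $TE_i=E_i$ to get $T^2=T$). For the substantive direction $i)\Rightarrow ii)$, the paper simply observes that $T\geq E_i\geq 0$ and invokes a black-box result (\cite[Proposition 1.46]{teiko-book}, which states that $0\leq E\leq P$ with $P$ a projection forces $PE=EP=E$), whereas you give a self-contained elementary proof: positivity of the summands forces $\ker T\subseteq\ker E_i$, and then the orthogonal splitting $\mathcal H=\operatorname{ran} T\oplus\ker T$ together with an adjoint argument yields $E_iT=TE_i=E_i$. Your route is essentially a proof of the cited lemma adapted to this situation; it is slightly less general than the cited statement (you use the full decomposition $T=\sum_i E_i$ to get the kernel inclusion, while the lemma needs only $0\leq E_i\leq T$ --- though that weaker hypothesis also gives $\langle x,E_ix\rangle\leq\langle x,Tx\rangle=0$ for $x\in\ker T$, so the same argument goes through), but it has the virtue of making the proposition independent of the external reference and of isolating exactly where positivity of the family $\mathcal E$ is used.
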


\begin{proof}
$i)\Rightarrow ii)$. We know that $T\geq E_i\geq 0$. From \cite[Proposition 1.46]{teiko-book}, it follows that  $T E_i=E_i T=E_i$.

$ii)\Rightarrow i)$. We have 
$T^2=T\sum_{i=1}^N E_i=\sum_{i=1}^N T E_i = \sum_{i=1}^N E_i=T$.
\end{proof}

\section{Decomposition of a positive operator by a symmetric family}
\label{sec:symmetric-decompositions}
In this section we are going to study decompositions of operators by families having the following form of symmetry. 

\begin{definition}
A family $\mathcal{E}=\{E_i\}_{i=1}^N$ of self-adjoint operators acting on a  finite dimensional Hilbert space $\mathcal H$ is called \emph{symmetric} if, for all $i,j\in\{1,2,\ldots,N\}$,
\begin{equation}\label{eq:symmetric-family}
\mathrm{Tr}[E_iE_j]=a\delta_{ij}+b(1-\delta_{ij}).
\end{equation}
The scalars $a,b$ are called the parameters of $\mathcal E$. 
\end{definition}

Most of the results below hold for general decompositions of self-adjoint operators. However, when the operator $T$ is positive and we require that the decomposition should also be positive, some additional structure emerges, and we emphasize this case in the respective results. 

\begin{proposition} \label{prop:relation-a-b-T}
Given a self-adjoint operator $T$, let $t_2 := \operatorname{Tr} [T^2]$. Consider $\mathcal E$ a symmetric family of self-adjoint operators as in \eqref{eq:symmetric-family}, having parameters $a$ and $b$. Then, $\mathcal{E}$ is symmetric decomposition of $T$ if and only if the following relations hold
\begin{equation}\label{b}
b=\frac{t_2-Na}{N(N-1)}
\end{equation}
and
\begin{equation}\label{b1}
\mathrm{Tr}[E_iT]=\frac{t_2}{N}, \qquad i=1,2,\ldots N
\end{equation}
\end{proposition}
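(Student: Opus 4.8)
The plan is to work with the operator $S := \sum_{i=1}^N E_i$ and to establish the two directions of the equivalence separately: the forward implication by a direct trace computation, and the backward one by a Hilbert--Schmidt norm argument. The guiding observation is that both quantities in \eqref{b} and \eqref{b1} are expressible purely in terms of the symmetry parameters $a,b$ and the integer $N$, so the content of the proposition is really a bookkeeping identity linking these scalars to $t_2$.

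For the forward implication I would assume $\mathcal E$ is a decomposition of $T$, i.e. $S=T$, and simply read off the two relations from the symmetry assumption \eqref{eq:symmetric-family}. First, for each fixed $i$,
\[
\operatorname{Tr}[E_i T] = \operatorname{Tr}\Big[E_i \sum_{j=1}^N E_j\Big] = \sum_{j=1}^N \operatorname{Tr}[E_i E_j] = a + (N-1)b,
\]
which is already independent of $i$. Summing over $i$ and using $t_2 = \operatorname{Tr}[T^2] = \operatorname{Tr}[T S] = \sum_i \operatorname{Tr}[E_i T]$ gives $t_2 = Na + N(N-1)b$; solving for $b$ yields \eqref{b}, and substituting this back shows that the common value $a+(N-1)b$ equals $t_2/N$, which is exactly \eqref{b1}.

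For the backward implication I assume \eqref{b} and \eqref{b1} and must deduce $S=T$. The key idea is that it suffices to show $\operatorname{Tr}[(S-T)^2]=0$: since $S-T$ is self-adjoint, a vanishing Hilbert--Schmidt norm forces $S=T$. Expanding,
\[
\operatorname{Tr}[(S-T)^2] = \operatorname{Tr}[S^2] - 2\operatorname{Tr}[ST] + \operatorname{Tr}[T^2],
\]
I would evaluate each term in turn. The symmetry relation gives $\operatorname{Tr}[S^2] = \sum_{i,j}\operatorname{Tr}[E_i E_j] = Na + N(N-1)b$, which collapses to $t_2$ by \eqref{b}; the hypothesis \eqref{b1} gives $\operatorname{Tr}[ST] = \sum_i \operatorname{Tr}[E_i T] = N\cdot (t_2/N) = t_2$; and $\operatorname{Tr}[T^2]=t_2$ by definition. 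The three contributions combine to $t_2 - 2t_2 + t_2 = 0$, finishing the argument.

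There is no serious obstacle here; the only conceptual point worth flagging is the backward direction, where one should resist attempting to reconstruct $S=T$ coordinatewise and instead recognize that the two scalar hypotheses are precisely the data needed to force the Hilbert--Schmidt distance between $S$ and $T$ to vanish. Note that neither the positivity of $T$ nor that of the family enters the argument, so the statement holds for arbitrary self-adjoint $T$ and arbitrary symmetric self-adjoint families $\mathcal E$, as the proposition asserts.
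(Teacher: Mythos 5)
Your proof is correct and follows essentially the same route as the paper: the forward direction is the same trace computation (the paper derives $t_2=Na+N(N-1)b$ from $\operatorname{Tr}[T^2]=\sum_{i,j}\operatorname{Tr}[E_iE_j]$ and then deduces \eqref{b1}, while you compute $\operatorname{Tr}[E_iT]$ first and sum over $i$, an immaterial reordering), and the backward direction is the identical Hilbert--Schmidt norm argument showing $\|\sum_i E_i - T\|_{HS}^2=0$. No gaps.
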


\begin{proof}
We assume first that $\mathcal{E}$ is a symmetric decomposition of $T$. We have that $T^2=\sum_{i,j=1}^N E_i E_j$, hence $$t_2 = \mathrm{Tr} [T^2]=\sum_{i,j=1}^N \mathrm{Tr}[E_iE_j]=Na+N(N-1)b.$$ It follows that $$b=\frac{t_2-Na}{N(N-1)}.$$
For $i\in\{1,2,\ldots N\}$, we have  $$\mathrm{Tr}[E_iT]=\mathrm{Tr}[E_i\sum_{j=1}^N E_j]=\mathrm{Tr}[E_i^2]+\sum_{\substack{j=1 \\ j\neq i}}^N\mathrm{Tr}[E_iE_j]=a+(N-1)b.$$
Using \eqref{b} we get that 
$$\mathrm{Tr}[E_iT]=a+\frac{t_2-Na}{N}=\frac{t_2}{N}.$$
Conversely, if \eqref{b} and \eqref{b1} hold, then we have
\begin{align*}
\left\| \sum_{i=1}^N E_i-T \right\|^2_{HS} &=\mathrm{Tr}[(\sum_{i=1}^N E_i-T)(\sum_{j=1}^NE_j-T)] \\
&= \mathrm{Tr}[\sum_{i,j=1}^N E_i E_j ] - \sum_{i=1}^N \mathrm{Tr}[E_iT]-\sum_{j=1}^N\mathrm{Tr}[TE_j] + \mathrm{Tr}[T^2] \\
&=Na+N(N-1)b-N\frac{t_2}{N}-N\frac{t_2}{N}+t_2=0.
\end{align*}
\end{proof}

\begin{proposition}\label{prop:a-lower-bound}
Let $\mathcal{E}$ be a symmetric decomposition of  a self-adjoint operator $T$. Then, the parameter $a$ of the family $\mathcal E$ satisfies $ a \geq t_2/N^2$. If $a>t_2 / N^2$ and $T \neq 0$, then the set $\{E_1,\ldots E_N\}$ is linear independent.  If $a=t_2 / N^2$, then the decomposition $\mathcal E$ is degenerate: $E_i=t_iT,$ for all $i$ and $\sum_{i=1}^N t_i=1$.
\end{proposition}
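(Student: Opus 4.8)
The plan is to encode all the symmetry relations into the Gram matrix of the family and to read off the three claims from its spectrum. First I would form the $N\times N$ Gram matrix $G$ with entries $G_{ij}=\operatorname{Tr}[E_iE_j]$; by \eqref{eq:symmetric-family} this is $G=(a-b)I_N+bJ_N$, where $J_N$ is the all-ones matrix. The eigenvalues of such a matrix are immediate: $a-b$ on the $(N-1)$-dimensional hyperplane $\mathbf 1^\perp=\{v:\sum_i v_i=0\}$, and $a+(N-1)b$ on the line spanned by $\mathbf 1=(1,\dots,1)$. Substituting the value of $b$ supplied by Proposition \ref{prop:relation-a-b-T} yields the two clean expressions $a-b=(N^2a-t_2)/(N(N-1))$ and $a+(N-1)b=t_2/N$.

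The lower bound is then free: $G$ is a Gram matrix, hence positive semidefinite, so in particular its eigenvalue $a-b\geq 0$; since $N(N-1)>0$ this says exactly $N^2a\geq t_2$, i.e.\ $a\geq t_2/N^2$. For the independence statement I would use the standard fact that $\{E_i\}$ is linearly independent (as a set of vectors in the real inner-product space of self-adjoint operators) precisely when $G$ is positive definite, equivalently when both of its eigenvalues are strictly positive. When $a>t_2/N^2$ the first eigenvalue satisfies $a-b>0$; the hypothesis $T\neq 0$ forces $t_2=\operatorname{Tr}[T^2]=\|T\|_{HS}^2>0$, so the second eigenvalue $a+(N-1)b=t_2/N$ is strictly positive as well. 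Hence $G$ is positive definite and the $E_i$ are independent. The hypothesis $T\neq 0$ is genuinely needed: if $T=0$ while $a>0$, then $\sum_i E_i=0$ is already a nontrivial relation.

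It remains to treat the boundary case $a=t_2/N^2$, which I expect to be the only point requiring care. Here $a-b=0$, so $G=a\,J_N$ has rank at most one and its kernel contains the entire hyperplane $\mathbf 1^\perp$. The key observation is that, for self-adjoint operators and real coefficients, $\sum_i v_iE_i=0$ if and only if $v^\top G v=0$ if and only if $v\in\ker G$, since $\operatorname{Tr}[S^2]=0$ forces $S=0$ for self-adjoint $S$. Applying this to the vectors $v=e_i-e_j\in\mathbf 1^\perp$ gives $E_i=E_j$ for all $i,j$; writing $E$ for the common value, the decomposition relation gives $NE=T$, so $E_i=T/N$. This is a degenerate decomposition with $t_i=1/N$ and $\sum_i t_i=1$, as required (the edge case $T=0$, which forces $a=0$ and $E_i=0$, is absorbed into the same conclusion).

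The part I would double-check is precisely this last equivalence between $\ker G$ and the linear relations among the $E_i$, together with the remark that for Hermitian operators real and complex linear independence coincide, so that analyzing the real Gram matrix loses no information. Everything else is a direct spectral reading of the matrix $(a-b)I_N+bJ_N$.
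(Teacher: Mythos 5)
Your proof is correct, but it takes a genuinely different route from the paper's. The paper proves the lower bound and the equality case via the Cauchy--Schwarz inequality applied to $\operatorname{Tr}[E_iT]\leq \|E_i\|_{HS}\|T\|_{HS}$ (together with $\operatorname{Tr}[E_iT]=t_2/N$ from Proposition \ref{prop:relation-a-b-T}), and proves linear independence by expanding $\|\sum_i\lambda_iE_i\|_{HS}^2=(a-b)\sum_i|\lambda_i|^2+b|\sum_i\lambda_i|^2$ and running a case analysis on the sign of $b$, reaching a contradiction with \eqref{b} when $b<0$ is too negative. You instead read everything off the spectrum of the Gram matrix $G=(a-b)I_N+bJ_N$: positive semidefiniteness of $G$ gives $a-b\geq 0$, i.e.\ $a\geq t_2/N^2$; strict positivity of both eigenvalues $a-b$ and $a+(N-1)b=t_2/N$ gives independence; and degeneracy of $G$ in the boundary case gives $E_i=E_j$ for all $i,j$, hence $E_i=T/N$. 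This is a cleaner and more unified argument --- it avoids the sign case analysis entirely, and in the equality case it yields the sharper conclusion $t_i=1/N$ rather than merely $E_i=t_iT$ (though the paper's Cauchy--Schwarz route also forces $t_i=1/N$ if one pushes it, via $\operatorname{Tr}[E_iT]=t_it_2=t_2/N$). The two points you flag for double-checking are both fine: for a PSD Gram matrix $v^\top Gv=0$ is equivalent to $Gv=0$ and to $\sum_iv_iE_i=0$ (since $\operatorname{Tr}[S^2]=0$ forces $S=0$ for self-adjoint $S$), and real linear independence of Hermitian operators does imply complex linear independence (split a complex relation into its self-adjoint and anti-self-adjoint parts), which matches the paper's use of complex scalars $\lambda_i$.
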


\begin{proof}
To establish the bound, we use the Cauchy--Schwarz inequality
$$(\mathrm{Tr}[E_iT])^2\leq \mathrm{Tr} [E_i^2]\cdot \mathrm{Tr} [T^2]$$
 and Proposition \ref{prop:relation-a-b-T} to get $(t_2/N)^2\leq a t_2$, so $t_2 / N^2\leq a$.
 
We suppose now that $a>t_2/N^2$. From Proposition \ref{prop:relation-a-b-T} it follows that this inequality is equivalent to the inequality $a>b$. Consider scalars $\lambda_i$ such that $\sum_{i=1}^N\lambda_i E_i = 0$. We have 
\begin{align*}
0 &= \left\|\sum_{i=1}^N\lambda_i E_i \right\|_{HS}^2=\mathrm{Tr}\left[ \left(\sum_{i=1}^N\bar{\lambda_i}E_i \right) \left( \sum_{j=1}^N\lambda_jE_j \right) \right] \\
&= \sum_{i,j=1}^N \bar{\lambda_i}\lambda_j \mathrm{Tr}(E_iE_j)=a\sum_{i=1}^N|{\lambda}_i|^2+b\sum_{\substack{i,j=1 \\ i\neq j}}^N\bar{{\lambda}_i}{\lambda}_j\\
&= (a-b)\sum_{i=1}^N|{\lambda}_i|^2+b\left|\sum_{i=1}^N{\lambda}_i \right|^2.
\end{align*}
If $b$ is non-negative, all the terms in the sum above are zero, so $\lambda_1 = \cdots = \lambda_N = 0$, showing that the family of operators operators $\mathcal E = \{E_i\}_{i=1}^N$ is linearly independent. On the other hand, if $b<0$, we have, using again Cauchy-Schwarz,
$${\frac{a-b}{-b} = \frac{\left|\sum_{i=1}^N{\lambda}_i \right|^2}{\sum_{i=1}^N|{\lambda}_i|^2}} \leq N,$$
and thus $b \leq -a/(N-1)$. However, this contradicts \eqref{b}: $T \neq 0$ and thus
$$b=\frac{t_2-Na}{N(N-1)} > -\frac{a}{N-1}.$$

Finally, if $a=t_2 / N^2$, using the equality case in the Cauchy-Schwarz inequality, we get $E_i=t_iT$ for some scalars $t_i$.  We have thus $T=(\sum_{i=1}^N t_i)T$, and we can choose the $t_i$ such that $\sum_{i=1}^N t_i=1$, finishing the proof.
\end{proof}

As a corollary of this result, we obtain a generalisation of \cite[Proposition 4.2]{cahill}.

\begin{corollary}
Let $\mathcal E = \{E_i\}_{i=1}^N$ be a symmetric family of self-adjoint operators acting on a Hilbert space $\mathcal H$, and let $d=\dim \mathcal H$. If the operators in $\mathcal E$ are pairwise distinct, then $N \leq d^2$. Moreover, if $\mathcal H$ is a real Hilbert space, then $N \leq d(d+1)/2$.
\end{corollary}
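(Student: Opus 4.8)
The plan is to deduce both bounds from the linear independence of $\mathcal E$ inside the real vector space $\mathcal S$ of self-adjoint operators on $\mathcal H$, equipped with the Hilbert--Schmidt inner product. Indeed $\dim_{\mathbb R}\mathcal S = d^2$ when $\mathcal H$ is complex and $\dim_{\mathbb R}\mathcal S = d(d+1)/2$ when $\mathcal H$ is real, so once one knows that the $E_i$ are linearly independent, the inequalities $N \leq d^2$ and $N \leq d(d+1)/2$ follow at once. Thus the whole problem reduces to extracting linear independence from the symmetry hypothesis together with pairwise distinctness.

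To this end I would study the Gram matrix $G = (\operatorname{Tr}[E_iE_j])_{i,j=1}^N$, which by \eqref{eq:symmetric-family} equals $(a-b)I_N + bJ_N$, with $J_N$ the all-ones matrix. The family $\mathcal E$ is linearly independent precisely when $G$ is nonsingular, and since $J_N$ has eigenvalues $N$ (simple, eigenvector $\mathbf 1 = (1,\dots,1)^\top$) and $0$ (multiplicity $N-1$), the matrix $G$ has exactly the two eigenvalues $a+(N-1)b$ (simple) and $a-b$ (multiplicity $N-1$). The pairwise-distinctness hypothesis enters through the identity $\|E_i-E_j\|_{HS}^2 = 2(a-b)$, valid for $i\neq j$: since the Hilbert--Schmidt norm is a genuine norm on $\mathcal S$, distinct operators force $a-b>0$, so the eigenvalue $a-b$ is strictly positive and already contributes rank $N-1$.

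It then remains to control the simple eigenvalue $a+(N-1)b$, and I expect this to be the main obstacle. Being a Gram matrix, $G$ is positive semidefinite, so automatically $a+(N-1)b\geq 0$; the crux is to upgrade this to a strict inequality, which is exactly what is needed for $G$ to be nonsingular and hence for full linear independence. The cleanest way to secure it is to observe that $b\geq 0$, which holds in the situations of interest: for instance when the $E_i$ are positive one has $\operatorname{Tr}[E_iE_j]\geq 0$, and likewise in the equiangular-lines setting of \cite[Proposition 4.2]{cahill} being generalised, where $\operatorname{Tr}[E_iE_j]=|\langle v_i,v_j\rangle|^2$. Granting $b\geq 0$ together with $a>b$, one gets $a+(N-1)b\geq a>0$, so $G\succ 0$, the $E_i$ are linearly independent, and the two bounds follow from $\dim_{\mathbb R}\mathcal S$.

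Finally I would record what happens in the boundary case $a+(N-1)b=0$ that the argument must exclude, since this is the genuine sticking point. There $\mathbf 1\in\ker G$, which reads $\operatorname{Tr}[E_i\sum_j E_j]=0$ for every $i$ and hence $\|\sum_j E_j\|_{HS}^2=0$, i.e.\ $\sum_j E_j=0$. In this degenerate regular-simplex configuration $G$ has rank only $N-1$, yielding merely the weaker bound $N\leq d^2+1$ (resp.\ $N\leq d(d+1)/2+1$); ruling it out is precisely the role played by the positivity input $b\geq 0$ described above, and it is the step on which the sharp constant in the statement rests.
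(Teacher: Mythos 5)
Your overall strategy is the same as the paper's: reduce to linear independence of the $E_i$ inside the real vector space of self-adjoint operators (of dimension $d^2$ over a complex $\mathcal H$ and $d(d+1)/2$ over a real one), and extract linear independence from the Gram matrix $G=(a-b)I_N+bJ_N$, whose invertibility is governed by its two eigenvalues $a-b$ and $a+(N-1)b$. The paper performs exactly this computation inside Proposition \ref{prop:a-lower-bound} (the identity $\|\sum_i\lambda_iE_i\|_{HS}^2=(a-b)\sum_i|\lambda_i|^2+b\,|\sum_i\lambda_i|^2$ is precisely the spectral decomposition of your $G$), and, like you, obtains $a-b>0$ from pairwise distinctness via Cauchy--Schwarz.

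The divergence is in how the simple eigenvalue $a+(N-1)b$ is shown to be nonzero, and here you have put your finger on a real issue. You propose to use $b\geq 0$, which is not among the hypotheses: the $E_i$ are only assumed self-adjoint, so $\operatorname{Tr}[E_iE_j]$ may well be negative. The paper instead excludes $a+(N-1)b\leq 0$ by appealing to Proposition \ref{prop:a-lower-bound}, whose proof uses relation \eqref{b} to get $a+(N-1)b=t_2/N>0$; but that requires $\mathcal E$ to be a decomposition of a nonzero operator $T$, which is likewise not a hypothesis of the corollary as stated. Neither argument closes the degenerate case $a+(N-1)b=0$ from the stated assumptions alone, and indeed it cannot be closed: a regular simplex of $N=d^2+1$ unit-norm self-adjoint operators summing to zero (already $E_1=1$, $E_2=-1$ for $d=1$) is a pairwise distinct symmetric family violating the bound. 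Some extra hypothesis is needed --- positivity of the $E_i$ (which gives your $b\geq 0$, and holds in the setting of \cite{cahill} being generalized), or $\sum_iE_i\neq 0$ (the paper's implicit one). So your proof is incomplete for the statement as literally written, but so is the paper's, and your write-up is the one that correctly isolates the missing ingredient; modulo that shared caveat the two arguments are essentially identical.
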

\begin{proof}
Since the operators in $\mathcal E$ are pairwise distinct, from the Cauchy-Schwarz inequality it follows that $b < a$, where $a,b$ are the parameters of the symmetric family $\mathcal E$. In Proposition \ref{prop:a-lower-bound}, it has been shown that in this case, the operators $\{E_1, \ldots, E_N\}$ are linearly independent. Thus, $N$ must be at most the dimension of the space of self-adjoint operators on $\mathcal H$, proving the claim. 
\end{proof}

One can also upper bound the parameter $a$ of a symmetric decomposition in the case where the elements of the decomposition are positive operators (i.e.~positive semidefinite matrices). 

\begin{proposition}\label{prop:a-upper-bound}
Let $\mathcal{E}$ be a positive symmetric decomposition of  a positive operator $T$. Then, the parameter $a$ of the family $\mathcal E$ satisfies $ a \leq t_2/N$.
\end{proposition}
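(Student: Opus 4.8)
The plan is to exploit positivity in its simplest form: the trace of a product of two positive semidefinite operators is non-negative. Concretely, for any $i \neq j$ one has
$$\operatorname{Tr}[E_iE_j] = \operatorname{Tr}\bigl[E_i^{1/2}E_j E_i^{1/2}\bigr] \geq 0,$$
because $E_i^{1/2}E_j E_i^{1/2}$ is positive semidefinite whenever $E_i, E_j \geq 0$. In the notation of the symmetric family this says exactly that the off-diagonal parameter is non-negative, $b \geq 0$. This is the only place where the positivity hypotheses on $T$ and on the $E_i$ genuinely enter, and it is the conceptual heart of the argument.

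With $b \geq 0$ in hand, I would invoke Proposition \ref{prop:relation-a-b-T}, which gives the identity $\operatorname{Tr}[E_iT] = a + (N-1)b = t_2/N$. Rearranging yields
$$a = \frac{t_2}{N} - (N-1)b \leq \frac{t_2}{N},$$
since $(N-1)b \geq 0$. This completes the bound.

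Alternatively, and perhaps more transparently, I would avoid passing through the parameter $b$ altogether and argue directly: expanding $\operatorname{Tr}[E_iT] = \operatorname{Tr}[E_i\sum_{j=1}^N E_j] = \operatorname{Tr}[E_i^2] + \sum_{j \neq i}\operatorname{Tr}[E_iE_j]$, every cross term $\operatorname{Tr}[E_iE_j]$ is non-negative by the same positivity fact, so $\operatorname{Tr}[E_iT] \geq \operatorname{Tr}[E_i^2] = a$. Combined with $\operatorname{Tr}[E_iT] = t_2/N$ from Proposition \ref{prop:relation-a-b-T}, this again gives $a \leq t_2/N$.

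I do not anticipate a real obstacle here; the result is essentially immediate once one records that $\operatorname{Tr}[E_iE_j] \geq 0$. The only subtlety worth a sentence in the write-up is justifying this inequality for non-commuting positive operators, which is handled cleanly by the symmetrization $\operatorname{Tr}[E_iE_j] = \operatorname{Tr}[E_i^{1/2}E_jE_i^{1/2}]$. It is also worth noting the contrast with Proposition \ref{prop:a-lower-bound}: there the lower bound $a \geq t_2/N^2$ held for arbitrary self-adjoint decompositions via Cauchy--Schwarz, whereas the upper bound $a \leq t_2/N$ requires positivity of the decomposition.
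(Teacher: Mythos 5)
Your proof is correct and is essentially the paper's argument: the paper writes the same positivity fact in the aggregated form $\operatorname{Tr}[E_i(T-E_i)]\geq 0$ (using $T\geq E_i$) rather than term by term as $\operatorname{Tr}[E_iE_j]\geq 0$, and then concludes $a\leq \operatorname{Tr}[E_iT]=t_2/N$ exactly as you do. Your remark on justifying $\operatorname{Tr}[E_iE_j]\geq 0$ via the symmetrization $\operatorname{Tr}[E_i^{1/2}E_jE_i^{1/2}]$ is a reasonable addition, but the route is the same.
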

\begin{proof}
For any $i$, we have $T \geq E_i$ so $\operatorname{Tr}[E_i(T-E_i)] \geq 0$, hence, using Proposition \ref{prop:relation-a-b-T}, $a\leq \mathrm{Tr}[E_iT]=t_2 / N$. 
\end{proof}

\begin{proposition} If $T$ is a projection, then 

\begin{equation}
\frac{t_2}{N^2}\leq a\leq \frac{t_2^2}{N^2}
\end{equation}
The upper bound is saturated if and only if $E_i$ is of rank one, for $i=1,2,\dots N$.
\end{proposition}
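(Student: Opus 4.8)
The plan is to treat the two inequalities separately. The lower bound $t_2/N^2 \le a$ requires no new argument: a projection is in particular self-adjoint and $\mathcal E$ is a symmetric decomposition of it, so Proposition~\ref{prop:a-lower-bound} applies verbatim. All the content therefore lies in the upper bound $a \le t_2^2/N^2$ and its equality case. The crucial structural input comes from the projection hypothesis: since $T$ is a projection and $\mathcal E$ is a \emph{positive} decomposition, Proposition~\ref{proj} gives $E_i T = T E_i = E_i$ for every $i$. Taking traces and comparing with relation~\eqref{b1} of Proposition~\ref{prop:relation-a-b-T}, I obtain
$$\operatorname{Tr}[E_i] = \operatorname{Tr}[E_i T] = \frac{t_2}{N}, \qquad i = 1, \ldots, N.$$
This is the key reduction: the projection property upgrades the datum $\operatorname{Tr}[E_iT] = t_2/N$ into control over the ordinary trace $\operatorname{Tr}[E_i]$.

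Next I would exploit positivity of the $E_i$. Writing the (nonnegative) eigenvalues of $E_i$ as $\mu_1, \ldots, \mu_d \ge 0$, one has
$$a = \operatorname{Tr}[E_i^2] = \sum_{k} \mu_k^2 \;\le\; \Big( \sum_{k} \mu_k \Big)^2 = \big( \operatorname{Tr}[E_i] \big)^2 = \frac{t_2^2}{N^2},$$
the middle step being the elementary inequality $\sum_k \mu_k^2 \le (\sum_k \mu_k)^2$ for nonnegative reals (equivalently, $\|E_i\|_{HS}^2 \le (\operatorname{Tr} E_i)^2$, since the cross terms $\mu_k\mu_\ell$ are nonnegative). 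This establishes the upper bound.

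For the equality case, note that $\sum_k \mu_k^2 = (\sum_k \mu_k)^2$ forces every cross term $\mu_k\mu_\ell$ with $k\neq\ell$ to vanish, so at most one eigenvalue of $E_i$ is nonzero, i.e.\ $\operatorname{rk}[E_i] \le 1$; since $\operatorname{Tr}[E_i] = t_2/N > 0$ (as $T\neq 0$), in fact $\operatorname{rk}[E_i] = 1$. Because $a$ is a single parameter shared by the whole family, saturation for one index forces it for all, so $a = t_2^2/N^2$ holds precisely when every $E_i$ has rank one; the converse is immediate, as a rank-one positive operator $E_i = c_i P_i$ (with $P_i$ a rank-one projection) satisfies $\operatorname{Tr}[E_i^2] = c_i^2 = (\operatorname{Tr} E_i)^2$. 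I do not expect a genuine obstacle here; the single step that must not be glossed over is the passage from $\operatorname{Tr}[E_iT]$ to $\operatorname{Tr}[E_i]$, which is exactly where the projection hypothesis (through Proposition~\ref{proj}) is indispensable—for a general positive $T$ the two traces differ and the sharper bound $t_2^2/N^2$ would fail.
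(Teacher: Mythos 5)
Your proposal is correct and follows essentially the same route as the paper: use Proposition \ref{proj} together with \eqref{b1} to get $\operatorname{Tr}[E_i]=\operatorname{Tr}[E_iT]=t_2/N$, then apply $\operatorname{Tr}[E_i^2]\leq(\operatorname{Tr} E_i)^2$ for positive operators, with equality exactly in the rank-one case. You merely spell out the eigenvalue argument and the equality discussion in more detail than the paper does.
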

\begin{proof} We have $a=\mathrm{Tr}[E_i^2]\leq (\mathrm{Tr} E_i)^2=t_2^2 / N^2$, since if $T$ is projection, then by Proposition \ref{proj} $E_iT=TE_i=E_i$. We have equality iff $\mathrm{rk} [E_i]=1,$ for all $1 \leq i \leq N$.
\end{proof}

Assuming that the operators $E_i$ are positive and \emph{invertible}, one can derive a different upper bound for the parameter $a$ of the decomposition than the one obtained in Proposition \ref{prop:a-upper-bound}. Recall that the condition number of an invertible operator $A$ is defined as 
$$\kappa(A) := \|A\| \cdot \|A^{-1}\|.$$
In the case $A$ is a strictly positive operator, we have
$$\kappa(A) = \frac{\lambda_{\max}(A)}{\lambda_{\min}(A)}.$$

\begin{proposition} If $T$ is positive operator which has the decomposition $T=\sum_{i=1}^N E_i$, where $E_i,  i=1,\ldots N$ are strictly positive operators, then we have
\begin{equation}
\frac{t_2}{N^2}\leq a\leq M \frac{t_2}{N^2}
\end{equation}
where 
\begin{equation}
M=\min_{1\leq i\leq N} \frac{1}{4} \left( \sqrt{\kappa(T) \kappa(E_i)} + \frac{1}{\sqrt{\kappa(T) \kappa(E_i)} }\right)^2.
\end{equation}
\end{proposition}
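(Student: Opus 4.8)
The lower bound $t_2/N^2 \le a$ is already contained in Proposition~\ref{prop:a-lower-bound}, so the whole content lies in the upper bound. Note first that since each $E_i$ is strictly positive, $T=\sum_i E_i \ge E_1 > 0$ is invertible, so $\kappa(T)$ is well defined. Recalling $a=\operatorname{Tr}[E_i^2]$, $t_2=\operatorname{Tr}[T^2]$ and $\operatorname{Tr}[E_iT]=t_2/N$ from Proposition~\ref{prop:relation-a-b-T}, the plan is to reformulate the claim, for each fixed $i$, as a \emph{reverse} Cauchy--Schwarz (P\'olya--Szeg\H{o} / Kantorovich type) inequality
\[
\operatorname{Tr}[E_i^2]\,\operatorname{Tr}[T^2] \le \frac14\left(\sqrt{\kappa(T)\kappa(E_i)} + \frac{1}{\sqrt{\kappa(T)\kappa(E_i)}}\right)^2\left(\operatorname{Tr}[E_iT]\right)^2 .
\]
Indeed, substituting the three traces and dividing by $t_2>0$ turns this into $a \le \frac14\big(\sqrt{\kappa(T)\kappa(E_i)}+1/\sqrt{\kappa(T)\kappa(E_i)}\big)^2\, t_2/N^2$, and since this holds for every $i$, minimizing over $i$ yields exactly the stated bound with the constant $M$.

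To obtain this reverse inequality for a fixed $i$, I would first pass to an operator sandwich of $E_i$ by $T$. Setting $m_i := \lambda_{\min}(E_i)/\lambda_{\max}(T)$ and $M_i := \lambda_{\max}(E_i)/\lambda_{\min}(T)$, the elementary bounds $E_i \le \lambda_{\max}(E_i)\,I \le M_i T$ and $E_i \ge \lambda_{\min}(E_i)\,I \ge m_i T$ give the operator inequalities $m_i T \le E_i \le M_i T$, while a direct computation shows $M_i/m_i = \kappa(E_i)\kappa(T)$.

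The key step is then a Kantorovich-type argument under the trace. Since $E_i - m_i T \ge 0$ and $M_i T - E_i \ge 0$, and since the trace of a product of two positive semidefinite operators is nonnegative (because $\operatorname{Tr}[PQ]=\operatorname{Tr}[P^{1/2}QP^{1/2}]\ge 0$), expanding $\operatorname{Tr}[(E_i-m_iT)(M_iT-E_i)] \ge 0$ yields
\[
\operatorname{Tr}[E_i^2] + m_i M_i\,\operatorname{Tr}[T^2] \le (m_i+M_i)\operatorname{Tr}[E_iT].
\]
Applying AM--GM to the left-hand side, $\operatorname{Tr}[E_i^2]+m_iM_i\operatorname{Tr}[T^2]\ge 2\sqrt{m_iM_i\,\operatorname{Tr}[E_i^2]\operatorname{Tr}[T^2]}$, and squaring (legitimate since all quantities are positive and $\operatorname{Tr}[E_iT]=t_2/N>0$) gives $4m_iM_i\,\operatorname{Tr}[E_i^2]\operatorname{Tr}[T^2]\le (m_i+M_i)^2(\operatorname{Tr}[E_iT])^2$. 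As $(m_i+M_i)^2/(4m_iM_i)=\frac14(\sqrt{M_i/m_i}+\sqrt{m_i/M_i})^2$ and $M_i/m_i=\kappa(E_i)\kappa(T)$, this is precisely the reverse Cauchy--Schwarz inequality above.

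The only genuinely delicate point is this reverse Cauchy--Schwarz step: the ordinary Cauchy--Schwarz used for the lower bound needs no hypothesis, but reversing it requires the spectral control encoded in the sandwich $m_iT\le E_i\le M_iT$, which relies on strict positivity of both $E_i$ and $T$. The non-commutativity of $E_i$ and $T$ is harmless here, since we only take traces of products of positive operators; the Kantorovich identity replaces the scalar inequality $(x-m)(M-x)\ge 0$ by its operator analogue under the trace. Everything else is routine algebra followed by the final minimization over $i$.
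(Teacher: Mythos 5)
Your proof is correct and takes essentially the same route as the paper: the lower bound is quoted from Proposition \ref{prop:a-lower-bound}, and the upper bound is precisely the converse Cauchy--Schwarz (Kantorovich-type) inequality that the paper invokes as a black box from \cite{niculescu}. The only difference is that you supply a self-contained derivation of that converse inequality, via the sandwich $m_i T \le E_i \le M_i T$ and the expansion of $\operatorname{Tr}[(E_i - m_i T)(M_i T - E_i)] \ge 0$ followed by AM--GM, which checks out and correctly reproduces the constant $M$.
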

\begin{proof}
We use  Proposition \ref{prop:a-lower-bound} and the converse Cauchy-Schwarz inequality \cite[Corollary 1.4]{niculescu}.
\end{proof}

We describe next all the possible symmetric decompositions of a given self-adjoint operator $T$ acting on a finite dimensional Hilbert space. The result is a generalization of \cite[Theorem 3]{gour} to general operators $T$. Moreover, we give necessary conditions for the existence of a positive decomposition of $T$, in the case where $T$ is a positive operator (one can assume actually $T$ to be invertible in this case, since any positive decomposition of $T$ is supported on the orthogonal of the kernel of $T$).

\begin{proposition}\label{prop:general-symmetric-decompositions}

Let $T$ be a self-adjoint operator acting on $\mathcal H \simeq \mathbb C^d$, and consider an operator subspace $\mathcal F$ orthogonal to $\mathbb C T$, of dimension $N-1$. Then, the set of symmetric decompositions $\mathcal E = \{E_i\}_{i=1}^N$ of $T$ with support in $\mathbb CT \oplus \mathcal F$ is in bijection with $N$-tuples $(x, F_1, F_2, \ldots, F_{N-1})$, where $x$ is a non-negative number and  $F_1, \ldots, F_{N-1}$ is an orthonormal basis of $\mathcal F$. The bijection can be described as follows:  put $F := \sum_{i=1}^{N-1} F_i$, and define 
\begin{align}
\label{eq:R-i} R_i &= \frac{F}{\sqrt{N-1}(\sqrt N +1)} - \frac{\sqrt N}{\sqrt{N-1}} F_i, \qquad i=1, \ldots, N-1\\
\label{eq:R-N} R_N &= \frac{F}{\sqrt{N-1}}.
\end{align}
Then, for any non-negative real $x$, the operators
\begin{equation} \label{eq:E-i}
E_i := \frac{1}{N} T + x R_i, \qquad i=1,\ldots, N
\end{equation}
define a symmetric decomposition $T = \sum_{i=1}^N E_i$ of $T$, with parameters 
\begin{align}
\label{eq:a-general-decomposition} a &= \frac{t_2}{N^2} + x^2\\
b &= \frac{t_2}{N^2} - \frac{x^2}{N-1},
\end{align}
where $t_2 = \operatorname{Tr}[T^2] = \|T\|_\mathrm{HS}^2$. 
Reciprocally, all symmetric decompositions of $T$ can be obtained as described above. 

Assume now that $T$ is positive definite matrix and let $-\mu_i$ denote the smallest eigenvalue of $R_i$; since $\operatorname{Tr}[R_iT]=0$ for all $i$, we have $\mu_i>0$. If, moreover, 
\begin{equation}\label{eq:interval-x-PSD}
x \leq \frac{\tau}{N \max_{i=1}^N \mu_i},
\end{equation}
where $\tau = \lambda_{\min}(T)$, the operators $\{E_i\}_{i=1}^N$ are positive semidefinite. 
\end{proposition}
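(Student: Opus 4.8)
The plan is to prove Proposition \ref{prop:general-symmetric-decompositions} in three stages: first verify that the given construction always yields a symmetric decomposition with the stated parameters, then prove the reciprocal (every symmetric decomposition arises this way, and the correspondence is a bijection), and finally establish the positivity criterion. Throughout I will lean on Proposition \ref{prop:relation-a-b-T}, which characterizes symmetric decompositions of $T$ by the two conditions $b = (t_2 - Na)/(N(N-1))$ and $\operatorname{Tr}[E_i T] = t_2/N$.

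\emph{Forward direction.} First I would record the key algebraic facts about the $R_i$. Writing $F = \sum_{i=1}^{N-1} F_i$, a direct computation using orthonormality of the $F_i$ gives $\|F\|_{HS}^2 = N-1$ and $\operatorname{Tr}[R_i T] = 0$ for all $i$ (since each $F_i \perp T$). The crucial identity is $\sum_{i=1}^N R_i = 0$: indeed $\sum_{i=1}^{N-1} R_i = \frac{(N-1)F}{\sqrt{N-1}(\sqrt N+1)} - \frac{\sqrt N}{\sqrt{N-1}} F = \left(\frac{\sqrt{N-1}}{\sqrt N + 1} - \frac{\sqrt N}{\sqrt{N-1}}\right) F$, and simplifying the bracket (multiply through by $\sqrt{N-1}(\sqrt N+1)$) shows it equals $-\frac{1}{\sqrt{N-1}} = -R_N/F$ in the appropriate sense, so the total vanishes. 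From $\sum R_i = 0$ and $E_i = \frac1N T + x R_i$ we get $\sum_i E_i = T$ immediately. The Gram structure $\operatorname{Tr}[R_i R_j] = \delta_{ij} - 1/N$ should then be checked (this is the content that makes the family symmetric); combined with $\operatorname{Tr}[R_i T]=0$ it yields $\operatorname{Tr}[E_i E_j] = \frac{t_2}{N^2} + x^2 \operatorname{Tr}[R_i R_j]$, which gives exactly the claimed $a = t_2/N^2 + x^2$ and $b = t_2/N^2 - x^2/(N-1)$.

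\emph{Reciprocal direction and bijectivity.} Given an arbitrary symmetric decomposition $\mathcal E = \{E_i\}$ of $T$ with support in $\mathbb C T \oplus \mathcal F$, I would set $R_i := (E_i - \frac1N T)/x$ with $x^2 := a - t_2/N^2 \geq 0$ (nonnegativity by Proposition \ref{prop:a-lower-bound}), handling the degenerate case $x=0$ separately where all $E_i = \frac1N T$. By Proposition \ref{prop:relation-a-b-T} the $R_i$ satisfy $\operatorname{Tr}[R_i T] = 0$, so they lie in $\mathcal F$, and they have Gram matrix $\delta_{ij} - 1/N$ with $\sum_i R_i = 0$; such a configuration of $N$ unit-norm vectors summing to zero with all equal pairwise inner products $-1/N$ is a regular simplex in $\mathcal F$, which spans an $(N-1)$-dimensional space and is unique up to an orthogonal change of basis of $\mathcal F$. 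The map sending an orthonormal basis $(F_1,\ldots,F_{N-1})$ to the simplex $(R_1,\ldots,R_N)$ via \eqref{eq:R-i}--\eqref{eq:R-N} is precisely such a parametrization, so one recovers a unique $(x, F_1, \ldots, F_{N-1})$ from $\mathcal E$, establishing the bijection. I expect the bookkeeping to confirm that the formulas invert cleanly.

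\emph{Positivity.} For the final positive-semidefiniteness claim, assume $T \succ 0$ and let $-\mu_i$ be the smallest eigenvalue of $R_i$. Since $\operatorname{Tr}[R_i T] = 0$ and $T \succ 0$, the operator $R_i$ cannot be positive semidefinite (nor zero, as $x>0$), so its smallest eigenvalue is strictly negative, giving $\mu_i > 0$. Then $E_i = \frac1N T + x R_i \succeq \frac1N \tau I - x \mu_i I = (\frac{\tau}{N} - x\mu_i) I$, where $\tau = \lambda_{\min}(T)$, using $T \succeq \tau I$ and $R_i \succeq -\mu_i I$. This is $\succeq 0$ as soon as $x \leq \tau/(N\mu_i)$, and requiring it for all $i$ gives the stated bound \eqref{eq:interval-x-PSD}.

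The main obstacle is the reciprocal/bijectivity step: verifying that \eqref{eq:R-i}--\eqref{eq:R-N} genuinely parametrize \emph{all} regular simplices in $\mathcal F$ (i.e.\ that every symmetric decomposition, not just those of the displayed form, is captured), which requires the uniqueness-up-to-rotation of the equiangular simplex configuration. The forward computations and the positivity estimate are routine eigenvalue and trace manipulations.
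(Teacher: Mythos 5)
Your overall strategy coincides with the paper's proof (direct trace computations for the forward direction, inversion of the defining equations for the reciprocal, an eigenvalue bound for positivity), but there is a concrete error at the heart of your forward direction: the Gram matrix of the $R_i$, which you assert as $\operatorname{Tr}[R_iR_j]=\delta_{ij}-1/N$ and flag as ``should then be checked'' without checking it. The check fails. Using $\langle F_i,F\rangle = 1$ and $\langle F,F\rangle = N-1$, the actual value in all three cases ($i,j\leq N-1$; $i\leq N-1,\ j=N$; $i=j=N$) is
\begin{equation*}
\operatorname{Tr}[R_iR_j]=\delta_{ij}-\frac{1-\delta_{ij}}{N-1},
\end{equation*}
i.e.\ the $R_i$ are \emph{unit} Hilbert--Schmidt norm with common overlap $-1/(N-1)$. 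With your value one would obtain $a=\frac{t_2}{N^2}+\frac{N-1}{N}x^2$ and $b=\frac{t_2}{N^2}-\frac{x^2}{N}$, contradicting \eqref{eq:a-general-decomposition}; with the correct value the stated parameters come out. The same wrong constant reappears in your reciprocal step, where it is moreover internally inconsistent: $N$ unit-norm operators with $\sum_i R_i=0$ and constant pairwise overlap $c$ satisfy $0=\|\sum_i R_i\|^2_{HS}=N+N(N-1)c$, forcing $c=-1/(N-1)$, never $-1/N$. What you wrote is the Gram of the \emph{unnormalized} simplex, namely the projection $I_N-J_N/N$; the true Gram is its multiple $\frac{N}{N-1}\left(I_N-J_N/N\right)$ (compare the remark after the proposition, where $VV^\top=\frac{N}{N-1}P_{\neq 1}$). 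Note also that your recovery of $x$ via $x=\|E_i-T/N\|_{HS}$ relies precisely on $\|R_i\|_{HS}=1$, which your Gram denies. The fix is a one-line recomputation, after which the argument goes through.

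Apart from this constant, your route matches the paper's: your verification of $\sum_i R_i=0$ is correct, the trace expansion $\operatorname{Tr}[E_iE_j]=\frac{t_2}{N^2}+x^2\operatorname{Tr}[R_iR_j]$ (using $\operatorname{Tr}[R_iT]=0$) is exactly the paper's computation, and your positivity estimate $\lambda_{\min}(E_i)\geq \tau/N-x\mu_i$ is the paper's; you even supply the justification of $\mu_i>0$ (a nonzero $R_i\geq 0$ would give $\operatorname{Tr}[R_iT]>0$ when $T\succ 0$, and $R_i\neq 0$ since $\|R_i\|_{HS}=1$) that the paper only asserts. For the reciprocal, the simplex-rigidity lemma you identify as ``the main obstacle'' is unnecessary: as the paper does, simply solve \eqref{eq:R-N} for $F=\sqrt{N-1}\,R_N$ and \eqref{eq:R-i} for $F_i=\frac{\sqrt{N-1}}{\sqrt N}\left(\frac{F}{\sqrt{N-1}(\sqrt N+1)}-R_i\right)$; orthonormality of the $F_i$ and $F_i\perp T$ then follow directly from the symmetry relation and $\operatorname{Tr}[E_iT]=t_2/N$ (Proposition \ref{prop:relation-a-b-T}), with $x^2=a-t_2/N^2\geq 0$ from Proposition \ref{prop:a-lower-bound}, as you say. (At $x=0$ the claimed bijection degenerates, since every orthonormal basis yields $E_i=T/N$; this defect is in the statement itself and is passed over silently by the paper's proof as well as yours.)
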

\begin{proof}
The proof follows closely \cite[Theorem 3]{gour}, with a different normalization of the operators $R_i$. Let us show first the relation between the angles among the $F_i$'s and the angles among the $E_j$'s. Starting from an orthonormal family $F_1, \ldots, F_{N-1}$, by direct computation, and using facts such as $\langle F_i, F \rangle = 1$, $\langle F, F \rangle = N-1$, $\langle R_i, T \rangle = 0$, the symmetry of the family $\{E_i\}_{i=1}^N$ follows, namely
\begin{equation}\label{eq:symmetry-relation-proof}
\langle E_i , E_j \rangle = a \delta_{ij} + b(1-\delta_{ij}).
\end{equation}
The decomposition property $\sum_i E_i = T$ follows from the fact that $\sum_i R_i = 0$, which can be shown directly from \eqref{eq:R-i} and \eqref{eq:R-N}. 

Reciprocally, one has $x = \|E_i - T/N\|_2$ and we can write the operators $F_i$ in terms of the $E_j$ working back the equations \eqref{eq:R-i},\eqref{eq:R-N},\eqref{eq:E-i}.
The orthonormality of the $F_i$'s and the fact that $F_i \perp T$, for all $1\leq i \leq N-1$ follow now from the symmetry relation \eqref{eq:symmetry-relation-proof}.

Let us now discuss the positivity of the operators $E_i$. We have, by standard inequalities,
$$\lambda_{\min}(E_i) \geq \frac{\tau}{N} + x \lambda_{\min}(R_i) = \frac{\tau}{N} - x \mu_i.$$
Hence, if $x \geq 0$ is as in \eqref{eq:interval-x-PSD}, then necessarily $E_i \geq 0$.
\end{proof}

\begin{remark}
The equations \eqref{eq:R-i},\eqref{eq:R-N},\eqref{eq:E-i} relating the operators $E_i$ to the orthonormal basis $F_j$ can be summarized as follows:
$$\forall 1 \leq i \leq N, \qquad E_i = \frac{1}{N}T+x\sum_{j=1}^{N-1} V_{ij}F_j,$$
where $V \in M_{N \times (N-1)}(\mathbb R)$ is the following matrix
$$V =\frac{1}{{\sqrt{N-1}(\sqrt{N}+1) }}\begin{bmatrix}
1-\sqrt N - N & 1 & \cdots  & 1\\
1 & 1-\sqrt N - N & \cdots  & 1\\
\vdots & \vdots & \ddots & \vdots \\
1 & 1 & \cdots & 1-\sqrt N - N\\
\sqrt N + 1 & \sqrt N + 1 & \cdots & \sqrt N + 1
\end{bmatrix}.$$
Note that $V$ is a multiple of an isometry which maps $\mathbb R^{N-1}$ to the orthogonal of $\mathbb R (1, 1, \ldots, 1)^\top$ in $\mathbb R^N$: we have $V^\top V = N/(N-1)I_{N-1}$ and $VV^\top = N/(N-1)(I_N - N^{-1}J_N) = N/(N-1)P_{\neq 1}$, where $P_{\neq 1}$ is the projection on the orthogonal of $\mathbb R(1, \ldots, 1)^\top$ in $\mathbb R^N$.
\end{remark}

\begin{remark}
When $T = I$ and $N = d^2$, we recover \cite[Theorem 3]{gour}.
\end{remark}

\begin{remark}
Note that the geometric parameters $a$ and $b$ of the decomposition $\mathcal E$ depend only on the square $x^2$ of the free parameter $x$; this is related to the fact that if one allows negative values of $x$, the $N$-tuples $(x, F_1, \ldots, F_{N-1})$ and $(-x, -F_1, \ldots, -F_{N-1})$ give the same decomposition of $T$. 
\end{remark}

In order to obtain an upper bound for the parameter $a$ of a symmetric decomposition of a positive operator $T$, we need the following lemma. 

\begin{lemma}\label{lem:optimization-A-B}
Let $B$ be a positive definite operator acting on $\mathcal H \simeq \mathbb C^d$, having eigenvalues $b_1 \geq b_2 \geq \cdots \geq b_d > 0$. The following two optimization problems are equivalent:
\begin{equation*}
\begin{aligned}[c]
(P_1): \quad \qquad \min \quad &\lambda_{\max}(A)\\
\mathrm{subject\,to}\quad  &A = A^* \\
& \|A\|_\mathrm{HS} = 1\\
&A \perp B
\end{aligned}
\qquad\qquad\qquad\qquad
\begin{aligned}[c]
(P_2): \quad \qquad \min \quad & a_1\\
\mathrm{subject\,to}\quad    & a_1 \geq a_2 \geq \cdots \geq a_d \in \mathbb R \\
& \textstyle{\sum_{i=1}^d a_i^2} = 1\\
& \textstyle{\sum_{i=1}^d a_i b_i} \geq 0\\
& \textstyle{\sum_{i=1}^d a_i b_{d+1-i}} \leq 0,
\end{aligned}
\end{equation*}
and have common value 
\begin{equation}\label{eq:optimization-value}
\varphi(B) = \varphi(b) = \left[(\beta/b_d -1)^2 + d-1 \right]^{-1/2},
\end{equation}
where $\beta = \operatorname{Tr}[B] = \sum_{i=1}^d b_i$.
\end{lemma}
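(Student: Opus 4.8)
The plan is to prove the lemma in two stages: first reduce the operator program $(P_1)$ to the eigenvalue program $(P_2)$, and then solve $(P_2)$ in closed form.

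\emph{Equivalence of $(P_1)$ and $(P_2)$.} I would parametrize a competitor $A$ in $(P_1)$ by its ordered eigenvalues $a_1 \ge \cdots \ge a_d$; then $\|A\|_{\mathrm{HS}}^2 = \sum_i a_i^2$ and $\lambda_{\max}(A) = a_1$, so the objective and the spectral constraints of $(P_1)$ already match $(P_2)$. The only remaining degree of freedom is the eigenbasis of $A$ relative to that of $B$. The key observation is that, as $A$ ranges over all self-adjoint operators with fixed spectrum $(a_i)$, the quantity $\operatorname{Tr}[AB]$ sweeps out \emph{exactly} the interval $\bigl[\sum_i a_i b_{d+1-i},\ \sum_i a_i b_i\bigr]$: the two endpoints are von Neumann's trace inequality together with its reverse (applied to $-A$), while every intermediate value is attained because the unitary orbit $\{U\operatorname{diag}(a)U^*\}$ is connected and $U \mapsto \operatorname{Tr}[U\operatorname{diag}(a)U^*B]$ is continuous. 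Consequently a self-adjoint $A$ with spectrum $(a_i)$ and $A \perp B$ exists if and only if $\sum_i a_i b_i \ge 0$ and $\sum_i a_i b_{d+1-i} \le 0$, which are precisely the two linear constraints of $(P_2)$. Hence the two programs share the same optimal value.

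\emph{Reduction of $(P_2)$ to a convex maximization.} First I would record that the value is strictly positive: if $a_1 \le 0$ then all $a_i \le 0$, so $\sum_i a_i b_i \le 0$, and together with $\sum_i a_i b_i \ge 0$ and $b_i > 0$ this forces $a = 0$, contradicting $\sum_i a_i^2 = 1$. Because every constraint except the normalization is positively homogeneous, rescaling by $1/a_1$ shows that the value of $(P_2)$ equals $\Phi^{-1/2}$, where $\Phi$ is the maximum of $\|a\|^2$ over the polytope $\mathcal P = \{a : a_1 = 1,\ 1 \ge a_2 \ge \cdots \ge a_d,\ \sum_i a_i b_i \ge 0,\ \sum_i a_i b_{d+1-i} \le 0\}$. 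Since $\mathcal P$ is compact and $a \mapsto \|a\|^2$ is convex, this maximum is attained at a vertex. To locate it, I would enlarge $\mathcal P$ to $\mathcal P'$ by dropping the constraint $\sum_i a_i b_{d+1-i} \le 0$ (so $\Phi \le \Phi' := \max_{\mathcal P'}\|a\|^2$) and enumerate the vertices of $\mathcal P'$: apart from the all-ones point $(1,\dots,1)$ (value $d$), these are the two-block points $v_j = (\underbrace{1,\dots,1}_{j},w_j,\dots,w_j)$ for $j = 1,\dots,d-1$, obtained by making the linear constraint active with a single jump after position $j$, namely $w_j = -S_j/(\beta - S_j)$ with $S_j = b_1 + \cdots + b_j$; their values are $V_j = j + (d-j)w_j^2$.

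\emph{Identifying the optimal vertex.} The hard part will be to show that $V_{d-1}$ dominates every other vertex value, i.e.\ that putting all of the negativity into the single last coordinate is optimal. For $j = d-1$ one has $w_{d-1} = -(\beta - b_d)/b_d = -(\beta/b_d - 1)$ and $V_{d-1} = (d-1) + (\beta/b_d-1)^2 = \varphi(B)^{-2}$, matching the claimed value. I expect the inequality $V_j \le V_{d-1}$ to follow from an elementary estimate: writing $\beta - S_j = b_d + P_j$ with $P_j = b_{j+1} + \cdots + b_{d-1} \ge (d-1-j)\,b_d$ (each such $b_i \ge b_d$) and $S_{d-1} = S_j + P_j$, one discards nonnegative terms to reduce $V_{d-1} - V_j \ge 0$ to the sufficient condition $(b_d + P_j)^2 \ge (d-j)\,b_d^2$, which holds because $b_d + P_j \ge (d-j)\,b_d$. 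Finally, for $d \ge 2$ one has $\beta/b_d - 1 \ge d-1 \ge 1$, so $V_{d-1} \ge d$ and the all-ones point is not the maximizer; and since $v_{d-1}$ satisfies the discarded constraint $\sum_i a_i b_{d+1-i} = \beta(b_d - b_1)/b_d \le 0$, it lies in $\mathcal P$, whence $\Phi = \Phi' = V_{d-1}$. Therefore the common value of $(P_1)$ and $(P_2)$ is $\Phi^{-1/2} = \varphi(B)$, as claimed.
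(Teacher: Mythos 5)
Your proposal is correct. The first half (equivalence of $(P_1)$ and $(P_2)$) is essentially the paper's argument: both reduce to the spectrum of $A$ and invoke the fact that $\{\operatorname{Tr}[U\operatorname{diag}(a)U^*B]\}$ over unitaries $U$ is exactly the interval $[\langle a^\downarrow,b^\uparrow\rangle,\langle a^\downarrow,b^\downarrow\rangle]$; the paper cites \cite[Theorem 4.3.53]{hjo} where you derive it from von Neumann's inequality plus connectedness of the unitary orbit. The second half is where you genuinely diverge. The paper solves $(P_2)$ by a perturbation--majorization argument: any feasible $a$ that is not of the two-valued form $(s,\ldots,s,-r)$ can be spread out (increasing the $\ell^2$-norm while preserving the two linear constraints, since $a\prec a'$) and then renormalized to strictly decrease $a_1$; it then optimizes over two-valued vectors directly. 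You instead homogenize by $a_1$ (after checking $a_1>0$ on the feasible set), turning the problem into maximizing the convex function $\|a\|^2$ over a compact polytope, relax to the simplex $\mathcal P'$ with $d$ explicit vertices, and verify by the elementary estimate $(b_d+P_j)^2\ge (d-j)b_d^2$ that the vertex $v_{d-1}$ dominates and happens to satisfy the dropped constraint. Both routes land on the same extremal two-valued vector and the same value $\varphi(B)$. Your route buys a cleaner logical skeleton (convex maximization is attained at a vertex, and the vertex list of a simplex is finite and explicit), at the price of the vertex-comparison computation; the paper's route identifies the shape of the optimizer more directly but rests on a perturbation step whose feasibility bookkeeping (choice of $\varepsilon$, preservation of the ordering and of both scalar-product constraints) is left somewhat implicit. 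Two very minor points you should make explicit: compactness of $\mathcal P$ and $\mathcal P'$ follows because $\sum_i a_ib_i\ge 0$ together with $a_i\le 1$ forces $a_d\ge -(\beta-b_d)/b_d$; and the whole statement implicitly assumes $d\ge 2$ (for $d=1$ the feasible set of $(P_1)$ is empty), which the paper also does not address.
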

\begin{proof}
Let us first show that the programs $(P_{1,2})$ are equivalent, and then solve the easier, scalar version $(P_2)$. To show equivalence, note that the objective function and the Hilbert-Schmidt normalization condition in $(P_1)$ are spectral, i.e.~they depend only on the eigenvalues $a_1 \geq a_2 \geq \cdots \geq a_d$ of $A$. The equivalence of $(P_{1,2})$ follows from the following fact: given to spectra $a^\downarrow = (a_1 \geq \cdots \geq a_d)$ and $b^\downarrow = (b_1 \geq \cdots \geq b_d)$ with $b_d > 0$, there exist a unitary operator $U$ acting on $\mathbb C^d$ such that $U \operatorname{diag}(a_1, \ldots, a_d) U^* \perp \operatorname{diag}(b_1, \ldots, b_d)$ iff 
\begin{equation}\label{eq:a-b-scalar-products}
\langle a^\downarrow, b^\downarrow \rangle \geq 0 \qquad \text{and} \qquad \langle a^\downarrow, b^\uparrow \rangle \leq 0,
\end{equation}
where $b^\uparrow = (b_d \leq \cdots b_1)$; note that the conditions above are precisely those appearing in $(P_2)$. The property above is implied by the following fact (see \cite[Theorem 4.3.53]{hjo}):
$$\{\operatorname{Tr}[U \operatorname{diag}(a_1, \ldots, a_d) U^*  \operatorname{diag}(b_1, \ldots, b_d)] \, : \, U \text{ unitary}\} = [\langle a^\downarrow, b^\uparrow \rangle,\langle a^\downarrow, b^\downarrow \rangle].$$

Let us now solve $(P_2)$. The proof will consist of two steps: we shall show first that an optimal vector $a$ is necessarily of the following form:
\begin{equation}\label{eq:a-tweo-valued}
a = (\underbrace{s, \ldots, s}_{d-1 \text{ times}},-r),
\end{equation}
for some $r,s>0$. We shall then optimize over vectors of this form. 

For the first step, let us consider a feasible vector $a$ which is not as in \eqref{eq:a-tweo-valued}: $a = (a_1 = \cdots = a_m > a_{m+1} \geq \cdots \geq a_d$; here, $m+1 < d$. Since $a$ is at least three-valued, there exist $m<i<j$ such that $a_i > a_j$. Moreover, let us assume that $i$ is the smallest index where $a$ takes the value $a_i$ and $j$ is the largest index where $a$ takes the value $a_j$; we have thus $a_i < a_{i-1}$ and, if $j<d$, $a_j > a_{j+1}$. Let us define the vector $a'$ by
$$a'_k = \begin{cases} 
a_i + \varepsilon & \qquad \text{ if } k=i\\
a_j - \varepsilon & \qquad \text{ if } k=j\\
a_k & \qquad \text{ if } k \neq i,j,
\end{cases}$$
where $\varepsilon > 0$ is the largest such that $a'_1 \geq \cdots \geq a'_d$. In terms of the majorization relation (see \cite[Chapter II]{bha}), we have $a \prec a'$, so the scalar product relations \eqref{eq:a-b-scalar-products} still hold for $a'$. Note however that $a'$ is not feasible, since 
$$\|a'\|_2^2 - \|a\|_2^2 = 2\varepsilon(a_i-a_j) + \varepsilon^2 > 0.$$
We normalize $a'$ by $\|a'\|_2 > 1$: $a'' = a' / \|a'\|_2$. Obviously, $a''$ is feasible and moreover 
$$a''_1 = \frac{a'_1}{\|a'\|_2} = \frac{a_1}{\|a'\|_2}<a_1,$$
and thus $a$ cannot be optimal.

Let us now optimize over two-valued vectors $a$ as in \eqref{eq:a-tweo-valued}. The conditions in $(P_2)$ read, respectively (we put $\beta = \sum_i b_i$)
\begin{align*}
r^2 + (d-1)s^2 &= 1\\
s(\beta - b_d) - r b_d &\geq 0\\
s(\beta - b_1) - r b_1 &\leq 0.
\end{align*}
Thus, $(P_2)$ is equivalent to minimizing $s$ under the constraints 
$$\frac{b_d}{\sqrt{(\beta-b_d)^2 + (d-1)b_d^2}} \leq s \leq \frac{b_1}{\sqrt{(\beta-b_1)^2 + (d-1)b_1^2}},$$
and the conclusion follows. 

\end{proof}

\begin{remark}\label{rem:-lambda-min}
Note that in the optimization problem $(P_1)$ over self-adjoint matrices $A$, one could have replaced the objective function by $-\lambda_{\min}(A)$; this follows from the observation that the feasible set is invariant by sign change. 
\end{remark}

\begin{remark}
If $T = I$, then $\varphi(I) = \varphi(1, \ldots, 1) = 1/\sqrt{d(d-1)}$. Note also that, for arbitrary $B$, $\beta / b_d \geq d$, so $\varphi(B) \geq \varphi(I)$, for all $B > 0$.
\end{remark}

Equation \eqref{eq:interval-x-PSD} from Proposition \ref{prop:general-symmetric-decompositions} gives a sufficient condition for the variable $x$ in order for a decomposition of $T$ to be positive. Note that value in \eqref{eq:interval-x-PSD} might not be tight: larger values of $x$ might yield positive decompositions. We present next a necessary condition the parameter $a$ (and thus $x$) must satisfy in order for a positive symmetric decomposition of $T$ with those parameters to exist.

\begin{proposition}\label{prop:a-upper-bound-T-PSD}
Let $T$ be a positive operator acting on $\mathcal H \simeq \mathbb C^d$. Any positive symmetric decomposition of $T$  has parameter $a$ such that
\begin{equation}\label{eq:bound-a-decomposition}
\frac{t_2}{N^2}\leq a\leq \frac{t_2 + \|T\|_\infty^2 / \varphi^2}{N^2} = \frac{t_2}{N^2} + \left[\frac{ \|T\|_\infty}{\tau}\right]^2 \cdot \frac{(t_1 - \tau)^2 + (d-1)\tau^2}{N^2}, 
\end{equation}
where $t_{1,2} = \operatorname{Tr}[T^{1,2}]$, $\tau = \lambda_{\min}(T)$, $\|T\|_\infty = \lambda_{\max}(T)$, and $\varphi:=\varphi(T)$ was defined in Lemma \ref{lem:optimization-A-B}.
\end{proposition}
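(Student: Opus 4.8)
The plan is to treat the two bounds separately. The lower bound $a \geq t_2/N^2$ is exactly the content of Proposition \ref{prop:a-lower-bound}, so nothing new is required there and I would concentrate entirely on the upper bound.

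For the upper bound, I would first invoke the complete parameterization of symmetric decompositions from Proposition \ref{prop:general-symmetric-decompositions}. It guarantees that the given positive symmetric decomposition can be written as $E_i = \frac{1}{N}T + xR_i$ for a single non-negative scalar $x$ and self-adjoint operators $R_i$ satisfying $\operatorname{Tr}[R_iT] = 0$, with $a = t_2/N^2 + x^2$. Comparing this expression for $a$ with the direct expansion $a = \operatorname{Tr}[E_i^2] = t_2/N^2 + x^2\operatorname{Tr}[R_i^2]$ (the cross term vanishes since $R_i \perp T$) forces $\operatorname{Tr}[R_i^2] = 1$ whenever $x \neq 0$. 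Thus each $R_i$ is a unit Hilbert--Schmidt, self-adjoint operator orthogonal to $T$. Since the bound is trivial when $x = 0$ (then $a$ equals its lower bound), I may assume $x > 0$, and the whole problem reduces to bounding $x$ from above.

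The key step, and the main obstacle, is to convert positivity of the $E_i$ into a quantitative inequality on $x$ via the optimization quantity $\varphi = \varphi(T)$. Fix any index $i$. Because $R_i$ is a feasible point of problem $(P_1)$ of Lemma \ref{lem:optimization-A-B} with $B = T$, and because the feasible set is invariant under $A \mapsto -A$ (Remark \ref{rem:-lambda-min}), its smallest eigenvalue satisfies $\lambda_{\min}(R_i) \leq -\varphi$. Testing the positivity relation $E_i \geq 0$ against a unit eigenvector $v$ of $R_i$ associated with $\lambda_{\min}(R_i)$ gives
$$0 \leq \langle v, E_i v \rangle = \frac{1}{N}\langle v, Tv \rangle + x\lambda_{\min}(R_i) \leq \frac{\|T\|_\infty}{N} - x\varphi,$$
using $\langle v, Tv\rangle \leq \lambda_{\max}(T) = \|T\|_\infty$ and $x > 0$. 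Hence $x \leq \|T\|_\infty/(N\varphi)$, so $x^2 \leq \|T\|_\infty^2/(N^2\varphi^2)$.

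Finally I would substitute this into $a = t_2/N^2 + x^2$ to obtain the first form of the upper bound, $a \leq (t_2 + \|T\|_\infty^2/\varphi^2)/N^2$. The second, fully explicit form then follows by inserting the closed expression $\varphi = [(\beta/b_d - 1)^2 + d - 1]^{-1/2}$ from Lemma \ref{lem:optimization-A-B} with $\beta = \operatorname{Tr}[T] = t_1$ and $b_d = \lambda_{\min}(T) = \tau$, which yields $1/\varphi^2 = [(t_1 - \tau)^2 + (d-1)\tau^2]/\tau^2$; this is a routine algebraic simplification. The only genuinely delicate point is the translation of $E_i \geq 0$ into an inequality for $x$, which hinges on recognizing that the worst direction is the minimal eigenvector of $R_i$ and on having the sharp lower bound $-\lambda_{\min}(R_i) \geq \varphi$ supplied by the lemma.
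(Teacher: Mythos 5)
Your proof is correct and follows essentially the same route as the paper: parameterize the decomposition via Proposition \ref{prop:general-symmetric-decompositions}, use positivity of the $E_i$ together with $-\lambda_{\min}(R_i)\geq\varphi$ from Lemma \ref{lem:optimization-A-B} and Remark \ref{rem:-lambda-min} to get $x\leq \|T\|_\infty/(N\varphi)$, then substitute into \eqref{eq:a-general-decomposition}. Your only departures are cosmetic: you verify feasibility of $R_i$ in $(P_1)$ by deducing $\operatorname{Tr}[R_i^2]=1$ from the formula for $a$, and you test $E_i\geq 0$ against the minimal eigenvector of $R_i$ instead of citing the Weyl-type inequality $\lambda_{\min}(E_i)\leq\lambda_{\max}(T/N)+x\lambda_{\min}(R_i)$.
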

\begin{proof}
The lower bound was shown in Proposition \ref{prop:a-lower-bound}. For the upper bound, using \eqref{eq:a-general-decomposition}, we need to upper bound $x$. Since any positive value of $x$ gives a symmetric decomposing family $\mathcal E$, the only constraints on $x$ come from the positivity of the operators $E_i$. Using \eqref{eq:E-i}, we have
$$0 \leq \lambda_{\min}(E_i) \leq \lambda_{\max}(T/N) + \lambda_{\min}(xR_i) = \frac{\|T\|_\infty}{N} + x \lambda_{\min}(R_i).$$
Writing $\varphi$ for the value of the optimization problem in Lemma \ref{lem:optimization-A-B} with $B=T$
(see also Remark \ref{rem:-lambda-min}), we have $-\lambda_{\min}(R_i) \geq \varphi$, so 
\begin{equation}\label{eq:bound-a-ineq}
x \leq \frac{\|T\|_\infty}{N\varphi},
\end{equation}
which, together with \eqref{eq:a-general-decomposition} and \eqref{eq:optimization-value} gives the announced bound. 
\end{proof}
\begin{remark}
In the case $T=I$ and $N = d^2$, the bound \eqref{eq:bound-a-decomposition} reads $d^{-3} \leq a \leq d^{-2}$, which was also found in \cite[Eq.~(2)]{gour}.
\end{remark}

Let us discuss now the simplest case, $d=2$. We have the following result, characterizing the equality cases in the upper bound \eqref{eq:bound-a-decomposition}, when $N=d^2=4$.

\begin{proposition}
In the case $d=2$, $N=4$, consider the general construction of a symmetric decomposition of a positive definite operator $T \in M_2(\mathbb C)$ from Proposition \ref{prop:general-symmetric-decompositions}. The following statements are equivalent:
\begin{enumerate}
\item There exists an orthonormal basis $\{F_1, F_2, F_3\}$ of $(\mathbb C T)^\perp$ such that the decomposition $T = \sum_{i=1}^4 E_i$ saturates the upper bound \eqref{eq:bound-a-decomposition}.
\item The operator $T$ is a multiple of the identity.
\item For any orthonormal basis $\{F_1, F_2, F_3\}$ of $(\mathbb C T)^\perp$, the decomposition $T = \sum_{i=1}^4 E_i$ saturates the upper bound \eqref{eq:bound-a-decomposition}.
\end{enumerate}
\end{proposition}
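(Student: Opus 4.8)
The plan is to first unwind what saturation of \eqref{eq:bound-a-decomposition} means in terms of the free parameter $x$ from Proposition \ref{prop:general-symmetric-decompositions}, and then to treat the three implications separately, the implication $(1)\Rightarrow(2)$ being the only substantial one. Specializing to $d=2$ and writing $\lambda_1 \geq \lambda_2 > 0$ for the eigenvalues of $T$ (so that $\|T\|_\infty = \lambda_1$, $\tau = \lambda_2$, and $t_2 = \lambda_1^2 + \lambda_2^2$), formula \eqref{eq:optimization-value} gives $\varphi = \varphi(T) = \lambda_2/\sqrt{t_2}$, and the right-hand side of \eqref{eq:bound-a-decomposition} simplifies to $\frac{t_2}{N^2}\big(1 + \lambda_1^2/\lambda_2^2\big)$. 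Combining this with $a = t_2/N^2 + x^2$ from \eqref{eq:a-general-decomposition}, saturation is equivalent to $x = x_{\max} := \|T\|_\infty/(N\varphi)$ together with positivity of all $E_i$ at this value of $x$; the bound $x \leq x_{\max}$ is precisely \eqref{eq:bound-a-ineq}. Throughout I will use that each $R_i$ is a unit Hilbert--Schmidt norm self-adjoint operator orthogonal to $T$, so that $-\lambda_{\min}(R_i) \geq \varphi$ by Lemma \ref{lem:optimization-A-B} and Remark \ref{rem:-lambda-min}, together with the identity $\sum_{i=1}^N R_i = 0$.

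For $(2)\Rightarrow(3)$, if $T = \lambda I$ then $(\mathbb{C}T)^\perp$ is the space of traceless self-adjoint matrices, $\varphi = 1/\sqrt 2$, and every unit-norm traceless self-adjoint $2\times 2$ matrix has eigenvalues $\pm 1/\sqrt 2$. Hence for an arbitrary orthonormal basis $\{F_1,F_2,F_3\}$ the resulting operators $R_i$ all satisfy $\lambda_{\min}(R_i) = -1/\sqrt 2$, and a direct computation at $x = x_{\max} = \lambda\sqrt 2/4$ shows $\lambda_{\min}(E_i) = \lambda/4 - x_{\max}/\sqrt 2 = 0$, so every $E_i$ is positive (in fact of rank one) and $a = \lambda^2/4$ attains the bound. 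The implication $(3)\Rightarrow(1)$ is immediate, since at least one orthonormal basis of $(\mathbb{C}T)^\perp$ exists.

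The main obstacle is the implication $(1)\Rightarrow(2)$, which I would establish by contraposition: assuming $\lambda_1 > \lambda_2$, I will show saturation is impossible. If some orthonormal basis saturated the bound, then at $x = x_{\max}$ every $E_i$ is positive, and applying Weyl's inequality exactly as in the proof of Proposition \ref{prop:a-upper-bound-T-PSD} forces $-\lambda_{\min}(R_i) \leq \varphi$, hence $-\lambda_{\min}(R_i) = \varphi$ for all $i$. The crux is then a uniqueness statement: for $T$ with distinct eigenvalues there is a unique unit-norm self-adjoint $R \perp T$ with $-\lambda_{\min}(R) = \varphi$, namely $R_0 = \operatorname{diag}(-\lambda_2,\lambda_1)/\sqrt{t_2}$ in the eigenbasis of $T$.

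To prove this uniqueness I would write $R$ in the eigenbasis of $T$, use $\operatorname{Tr}[RT]=0$ to eliminate one diagonal entry, and then, after parametrizing the remaining freedom by the weight $u$ put on the diagonal (versus off-diagonal) part subject to $\|R\|_\mathrm{HS}=1$, check that $-\lambda_{\min}(R)$ splits into two terms each strictly monotone in $u$: the wrong sign of the diagonal part and any nonzero off-diagonal part only increase the value, so the minimum $\varphi$ is attained solely at the diagonal operator $R_0$. Granting uniqueness, all four operators $R_i$ must coincide with $R_0 \neq 0$, which contradicts $\sum_{i=1}^4 R_i = 0$. This contradiction shows that no basis can saturate the bound when $T$ is not scalar, closing the cycle of implications. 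The only genuinely delicate point is the monotonicity computation underlying uniqueness; everything else is bookkeeping with the $d=2$ spectral formulas.
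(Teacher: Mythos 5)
Your argument is correct, and while your treatment of $(2)\Rightarrow(3)$ and $(3)\Rightarrow(1)$ matches the paper's, you take a genuinely different route for the key implication $(1)\Rightarrow(2)$. Both proofs begin the same way: saturation of \eqref{eq:bound-a-decomposition} forces $x=\|T\|_\infty/(N\varphi)$, and positivity of the $E_i$ together with Weyl's inequality then pins down $\mu_i:=-\lambda_{\min}(R_i)=\varphi$ for every $i$. From there the paper stays purely spectral: since the $R_i$ have unit Hilbert--Schmidt norm and $d=2$, equal $\mu_i$ forces the four $R_i$ to be isospectral, hence to share a common trace $\delta$; the identity $\sum_i R_i=0$ gives $\delta=0$, so all $R_i$ are traceless, and since they span the $3$-dimensional space $(\mathbb C T)^\perp$ this space must coincide with the traceless hyperplane, i.e.\ $T\propto I_2$. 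You instead prove that the optimizer in Lemma \ref{lem:optimization-A-B} is \emph{unique} when $T$ has distinct eigenvalues, namely $R_0=\operatorname{diag}(-\lambda_2,\lambda_1)/\sqrt{t_2}$ in the eigenbasis of $T$, so all four $R_i$ would have to equal $R_0\neq 0$, contradicting $\sum_i R_i=0$. Your uniqueness claim is true and the monotonicity sketch does go through: writing $R$ with diagonal $(p,-p\lambda_1/\lambda_2)$ and off-diagonal entry $z$, eliminating $|z|^2$ via the norm constraint gives
$-\lambda_{\min}(R)=\frac{(\lambda_1-\lambda_2)p}{2\lambda_2}+\Bigl(\tfrac12-\tfrac{(\lambda_1-\lambda_2)^2p^2}{4\lambda_2^2}\Bigr)^{1/2}$
on $|p|\le\lambda_2/\sqrt{t_2}$, which for $\lambda_1>\lambda_2$ is concave and takes distinct values at the two endpoints, so it is minimized only at $p=-\lambda_2/\sqrt{t_2}$, forcing $z=0$. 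The paper's route avoids this computation entirely (isospectrality comes for free in $d=2$), while yours buys a sharper structural fact---uniqueness of the extremal direction---and localizes the obstruction to the single identity $\sum_i R_i=0$; it is also the more natural starting point for a $d>2$ generalization, where the two-eigenvalue isospectrality shortcut is unavailable.
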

\begin{proof}
Let us first show $(2) \implies (3)$, assuming $T=I_2$. Start from any orthonormal basis $\{F_1,F_2,F_3\}$ of $(\mathbb CT)^\perp$. 
Since the matrices $R_j$ have unit Schatten $2$-norm and are also traceless, they have eigenvalues $\pm 1/\sqrt{2}$, so the optimal constants $x$ and $a$ from Proposition \ref{prop:general-symmetric-decompositions} read, respectively, $x = \sqrt{2}/4$ and $a=1/4$, which is indeed the upper bound \eqref{eq:bound-a-decomposition}.

We show now $(1) \implies (2)$. Les us consider the inequality \eqref{eq:bound-a-ineq} which leads to the upper bound \eqref{eq:bound-a-decomposition}. Assuming the equality in \eqref{eq:bound-a-ineq} was achieved, i.e.
$$x = \frac{\|T\|_\infty}{N \max_i \mu_i},$$
we get $\mu_1 = \mu_2 = \mu_3 = \mu_4 = \|T\|_\infty/(N x_*)$, where $x_*$ is the optimal value of $x$ needed to achieve \eqref{eq:bound-a-decomposition}. In particular, since the matrices $R_j$ have the same Hilbert-Schmidt norm, they must be isospectral, so the respective positive eigenvalues $\rho_{1,2,3,4}$ of the $R_j$ matrices are also equal. Putting $\delta = \operatorname{Tr} R_j = \rho_j - \mu_j$, we get from \eqref{eq:R-i} and \eqref{eq:R-N} $\delta = 0$, so the $R_j$ are traceless, which implies $T = cI_2$, for some constant $c > 0$. 
\end{proof}

The result above excludes the existence of ``SIC--POVM--like'' decompositions for positive operators $T \neq cI_2$ which would saturate the upper bound for the norm of the operators. On the other hand, for $d=2$ and $T=I_2$, any starting orthonormal basis for the traceless operators produces a SIC--POVM, so starting from Pauli matrices as in \cite[Section 6]{gour} is not necessary in this case. 

\begin{example} We consider now an example for $d=2$ and $N=4$. Let $T=\begin{pmatrix} 
1 & 0 \\
0& u 
\end{pmatrix}$ with $u \geq 1$ and $F_1=\frac{1}{\sqrt{2}}\begin{pmatrix} 
0 & 1 \\
1& 0 
\end{pmatrix}, F_2=\frac{1}{\sqrt{2}}\begin{pmatrix} 
0 & i \\
-i& 0 
\end{pmatrix},F_3=\frac{1}{\sqrt{u^2+1}}\begin{pmatrix} 
u & 0 \\
0& -1 
\end{pmatrix}$. It is straightforward to check that 
$\left< F_i,F_j \right>=\delta_{ij}$ and $\operatorname{Tr}[F_iT]=0$, $i=1,2,3$. We have then
$$F:=F_1+F_2+F_3= \begin{bmatrix} 
\frac{u}{\sqrt{1+u^2}} & \frac{1}{\sqrt{2}}(1+i) \\
\frac{1}{\sqrt{2}}(1-i) & -\frac{1}{\sqrt{1+u^2}}
\end{bmatrix}.$$ 

We compare next the values of the lower and upper bounds for the largest value of the parameter $x$ giving a positive decomposition of $T$. These bounds have been obtained respectively in \eqref{eq:interval-x-PSD} and \eqref{eq:bound-a-ineq}. 

The operators $R_1$ and $R_2$ have the same eigenvalues:
\begin{equation}\lambda_{1,2}=\frac{u-1\pm \sqrt{(u-1)^2+4(13u^2+u+13)}}{6\sqrt{3}\sqrt{u^2+1}}.
\end{equation}
We denote by $\rho_1$ the largest one  and by $-\mu_1$ the smallest one. Similarly, for the matrix $R_3$, the eigenvalues are 
\begin{equation}\lambda_{3,4}=\frac{5(1-u)\pm \sqrt{25(u-1)^2+4(u^2+25u+1)}}{6\sqrt{3}\sqrt{u^2+1}}
\end{equation}
and we define $\rho_2$ to be the largest one  and $-\mu_2$ to be the smallest one. Again, for $R_4$, the eigenvalues are
\begin{equation}\lambda_{5,6}=\frac{3(u-1)\pm 3\sqrt{(u-1)^2+4(u^2+u+1)}}{6\sqrt{3}\sqrt{u^2+1}}
\end{equation}
with $\rho_3$ the largest one and $-\mu_3$ the smallest one.

We see that $\rho_2\leq \rho_1\leq \rho_3$ and $\mu_2\geq \mu_1\geq \mu_3$.
So, the bound from \eqref{eq:interval-x-PSD} reads
\begin{align*}
x_{LB} &= \frac{1}{4 \mu_2} \\
a_{LB} &= \frac{1}{16} \left(u^2+1\right)+\frac{27 \left(u^2+1\right)}{4 \left(5 u+\sqrt{u (29 u+50)+29}-5\right)^2},
\end{align*}
where we have used $t_1=\operatorname{Tr}[T]=1+u$, $t_2=\operatorname{Tr}[T^2]=1+u^2$, and $\tau = \lambda_{\min}(T) = 1$. 

When $u\geq 1$, the upper bound from \eqref{eq:bound-a-ineq} reads 
\begin{align*}
x_{UB} &= \frac{u\sqrt{1+u^2}}{4} \\
a_{UB} &= \frac{(1 + u^2)^2}{16}.
\end{align*}
We can easily checked that the two bounds are equal $a_{LB} = a_{UB}$ only when $u=1$, i.e.~when $T=I_2$, see Figure \ref{fig:example-u}.

\begin{figure}[htbp] 
\includegraphics[scale=0.8]{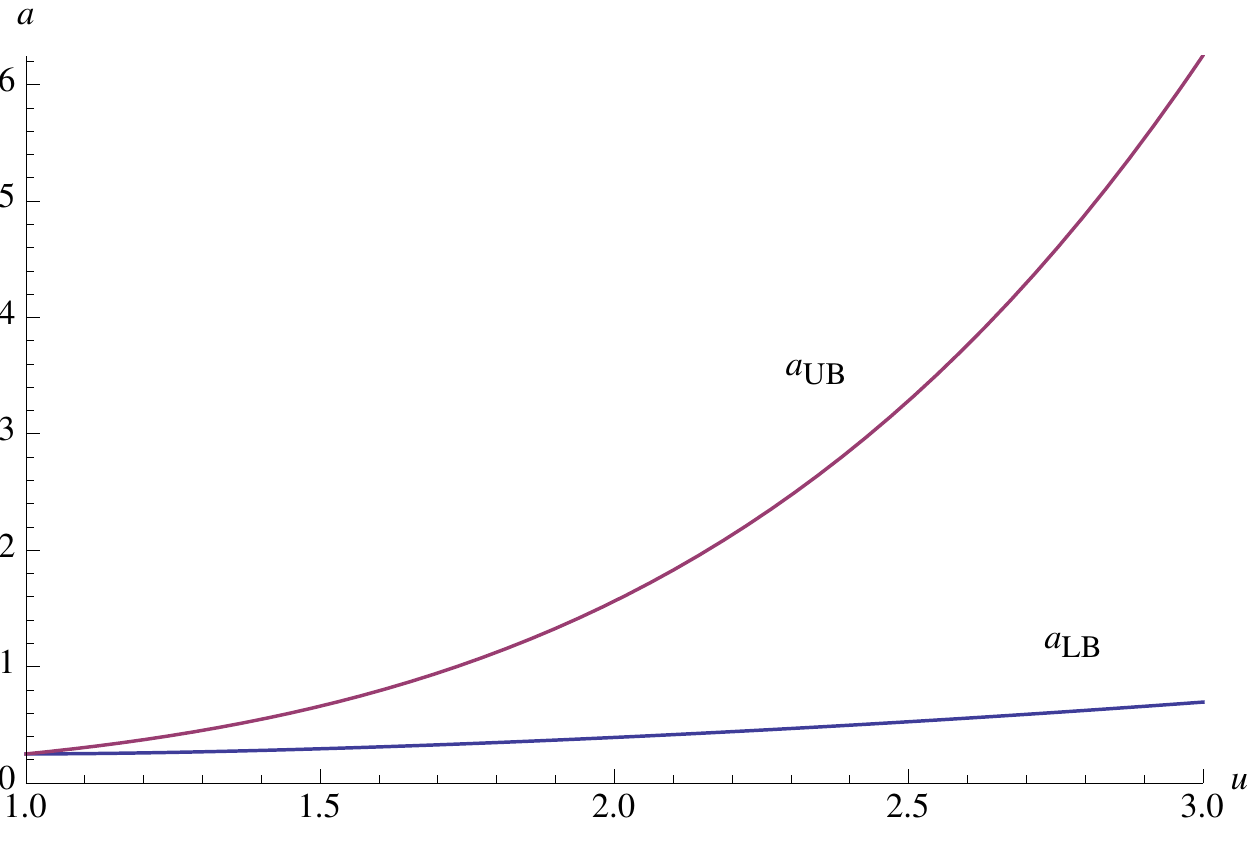}
\caption{Comparing the two bounds for the parameter $a$ of a symmetric decomposition of $T$ as a sum of $4$ positive semidefinite operators.} 
\label{fig:example-u}
\end{figure}\end{example}

With the help of a computer\footnote{see the supplementary material for the arXiv preprint.}, we have found that the actual largest value of the parameter $a$ for which there exist symmetric positive decompositions of $T$ is 
$$a_{opt} = \frac{1}{16}\left(1+u^2+\frac{27 \left(-5 \sqrt{u^2+1} u^2+5 \sqrt{u^2+1}+\left(u^2+1\right) \sqrt{u (25 u+4)+25}\right)^2}{4 (u (u+25)+1)^2}\right).$$
Interestingly, it turns out that this value is very close to the lower bound in \eqref{eq:interval-x-PSD}:
$$0 \leq a_{opt}-a_{LB} \leq \frac{27}{800} \left(125 \sqrt{29}-673\right) \approx 0.00491403.$$

\section{Dual symmetric decompositions}
\label{4}
In the following we consider the \emph{dual family} associated to a given non-degenerate symmetric family $\mathcal{E}=\{E_i\}_{i=1}^N$ and we show that, after rescaling, it also gives a symmetric decomposition of $T= \sum_i E_i$. Recall  that the dual family  $\tilde{\mathcal{E}}=\{\tilde{E}_i\}_{i=1}^N$ is another set of $N$ self-adjoint operators, having the same span as $\mathcal E$, and the additional property $\operatorname{Tr}[E_i\tilde{E}_j]=\delta_{ij}$, $\forall i,j=1,\ldots, N$. It is easy to check that the operators of the dual family $\tilde{E}_i$ are given by $\tilde{E}_i=\sum_{j=1}^{N} (G^{-1})_{ij}E_j$, where $G \in M_N^{sa}(\mathbb R)$ is the Gram matrix of $\mathcal{E}$, i.e.~$G_{ij} = \operatorname{Tr}[E_i E_j]$. Since we assume that the family $E$ is symmetric with parameters $a$, $b$, we have $G=(a-b)I_N+bJ_N$, where $J_N$ is the matrix with all entries equal to $1$. Moreover, we have assumed that $\mathcal E$ is non-degenerate, so $a > |b|$; it follows that 
$$G^{-1}=\frac{1}{a-b}\left[I_N-\frac{b}{a+b(N-1)}J_N\right].$$
Consequently, the dual family  $\tilde{\mathcal{E}}=\{\tilde{E}_i\}_{i=1}^N$ is given by 
\begin{equation}
\tilde{E}_i=\frac{1}{a-b}\left[E_i-\frac{b}{a+b(N-1)}\sum_{j=1}^N E_j\right], \qquad \forall 1 \leq i \leq N,
\end{equation}
and it is also a symmetric family of parameters
\begin{align}
\label{a-b-tilde1} \tilde{a}=&\frac{a+b(N-2)}{(a-b)(a+b(N-1))} \\
\label{a-b-tilde2} \tilde{b}=&-\frac{b}{(a-b)(a+b(N-1))}
\end{align}

It is of interest to study the properties of the dual family $\tilde{\mathcal{E}}$ in the case when the family $\mathcal{E}=\{E_i\}_{i=1}^N$ is a non-degenerate symmetric  decomposition of an self-adjoint operator $T$.   
\begin{proposition}
Let $T$ be a self-adjoint operator and $\mathcal{E}=\{E_i\}, i=1,\dots N $ a non-degenerate symmetric decomposition of $T$. Then, the dual family $\{\tilde{E_i}\}$ is given by
\begin{equation}\label{eq:dual-family}
\tilde{E}_i=\frac{N(N-1)}{aN^2-t_2}\left[ E_i-\frac{t_2-aN}{(N-1)t_2} T  \right].
\end{equation}
and has parameters
\begin{align*}
\tilde{a} &= \frac{N}{t_2}\cdot\frac{(N-2)t_2+aN}{aN^2-t_2}\\ 
\tilde{b} &=- \frac{N}{t_2}\cdot\frac{t_2-aN}{aN^2-t_2}
\end{align*}

\end{proposition}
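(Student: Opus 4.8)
The plan is to specialize the general formulas for the dual family, obtained in the discussion preceding this proposition, to the case where $\mathcal E$ is a decomposition of $T$. Two features distinguish this situation from that of a generic symmetric family. First, the sum $\sum_{j=1}^N E_j$ occurring in the general dual expression is now equal to $T$. Second, the parameters $a$ and $b$ are no longer free, but are tied together by Proposition \ref{prop:relation-a-b-T}, which gives $b = (t_2 - Na)/(N(N-1))$. Substituting these two facts into the generic expressions should produce the claimed formulas.

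Concretely, I would first evaluate the two scalar quantities controlling the dual family, namely $a-b$ and $a + b(N-1)$. Using the relation for $b$, short computations give $a - b = (aN^2 - t_2)/(N(N-1))$ and $a + b(N-1) = t_2/N$. The prefactor $1/(a-b)$ then becomes $N(N-1)/(aN^2 - t_2)$, which is exactly the constant in front of \eqref{eq:dual-family}. Next, the inner coefficient $b/(a + b(N-1))$ simplifies, after clearing denominators, to $(t_2 - aN)/((N-1)t_2)$; replacing $\sum_j E_j$ by $T$ in the general dual formula then reproduces \eqref{eq:dual-family} verbatim.

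For the parameters, I would substitute the same simplified quantities into the generic expressions $\tilde a = (a + b(N-2))/((a-b)(a+b(N-1)))$ and $\tilde b = -b/((a-b)(a+b(N-1)))$. The denominator is common to both and equals $(aN^2 - t_2)\,t_2/(N^2(N-1))$; for $\tilde a$ one must additionally simplify the numerator $a + b(N-2) = (aN + (N-2)t_2)/(N(N-1))$, while for $\tilde b$ the numerator is simply $b$. Collecting the powers of $N$ and $N-1$ then yields the announced values of $\tilde a$ and $\tilde b$.

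No genuine difficulty arises: the statement is a direct corollary of the general dual-family computation together with Proposition \ref{prop:relation-a-b-T}, and the only real effort is the algebraic bookkeeping in simplifying $a-b$, $a + b(N-1)$, and the numerator $a + b(N-2)$. The one point I would make explicit is that non-degeneracy guarantees $aN^2 - t_2 > 0$ (via Proposition \ref{prop:a-lower-bound}) and $T \neq 0$ gives $t_2 > 0$, so that every denominator appearing above is nonzero and the dual family together with its parameters is well defined.
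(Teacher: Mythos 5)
Your proposal is correct and follows exactly the paper's own route: the paper's proof is precisely ``direct computation using Proposition \ref{prop:relation-a-b-T} and equations \eqref{a-b-tilde1}--\eqref{a-b-tilde2}'', and your intermediate simplifications $a-b = (aN^2-t_2)/(N(N-1))$ and $a+b(N-1)=t_2/N$ check out and yield the stated formulas. Your added remark on non-degeneracy guaranteeing nonzero denominators is a sensible (and correct) explicit detail the paper leaves implicit.
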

\begin{proof} The results follow from direct computation using Proposition \ref{prop:relation-a-b-T} and equations \eqref{a-b-tilde1}--\eqref{a-b-tilde2}.
\end{proof}

Note that, when $T=I_d$ and $N=d^2$, the dual family given by \eqref{eq:dual-family} corresponds to the dual basis associated to (general) SIC--POVMs \cite[Section 2]{gour}.
\begin{remark}
 
The \emph{normalized dual family} $\hat{\mathcal{E}}=\{\hat{E}_i\}_{i=1}^N$, given by
$\hat{E}_i=\frac{t_2}{N}\tilde{E}_i$,
forms a new symmetric decomposition of $T$ of parameters
\begin{align*}
\hat a & =\frac{t_2}{N} \cdot  \frac{(N-2)t_2+aN}{aN^2-t_2}\\ 
\hat{b} &=- \frac{t_2}{N} \cdot \frac{t_2-aN}{aN^2-t_2}
\end{align*}
\end{remark}
 
\begin{remark}
Note that map which associates to a symmetric family $\mathcal E$ its (normalized) dual family $\hat{\mathcal E}$ is an involution; in particular $\hat{\hat a} = a$. 
\end{remark}

Since both families $\mathcal E$ and $ \hat{\mathcal E}$ are symmetric decompositions of $T$, it is of interest to relate the decomposition of the operator $T$ by the family $ \hat{\mathcal{E}} $ using a similar procedure as described for the family $\mathcal{E}=\{E_i\}$ in Proposition \ref{prop:general-symmetric-decompositions}. By straightforward computations, it is possible to show that in this case, starting from an orthonormal basis $ \{F_i\}$, $i=1,\ldots, N-1$, with the \emph{same} operators $R_i$, $i=1,\ldots, N$ as given by \eqref {eq:R-i} and  \eqref{eq:R-N}, we get that   $\hat{E}_i=\frac{1}{N}T+{\hat{x}}R_i$, where 
$$\hat x :=\frac{(N-1)t_2}{N^2a-t_2}x,$$ 
for any positive real $x$ ($x=0$ is not allowed here, since we have assumed the primal family $\mathcal E$ to be non-degenerate). Using  the expression of the symmetry parameter $a$ as given by \eqref{eq:a-general-decomposition}, it follows that 
$$\hat x = \frac{(N-1)t_2}{N^2}\frac{1}{x}.$$
As before, one may use Proposition  \ref{prop:general-symmetric-decompositions} to obtain a sufficient condition for positivity of the decomposition, see \eqref{eq:interval-x-PSD}. 

\section{Decompositions and Welch--type inequalities}
\label{5}
The following result is known in the literature as the \emph{simplex bound}. The idea, originating in \cite[Corollary 5.2]{chs}, is that among $N$ subspaces of fixed dimension in $\mathbb C^d$, there must be at least a pair with ``small'' principal angles. This result has been generalized to subspaces with weights in \cite[Theorem 3.4]{bachoc}, and then to arbitrary positive semidefinite operators with fixed trace in \cite[Proposition 4.1]{cahill}. In the result below, we slightly generalize this last result, by removing the fixed trace condition. The equality case has been recognized to play an important role, characterizing tight fusion frames, see \cite{welch}, \cite[Theorem 4.3]{kpcl}.

\begin{proposition}\label{prop:Welch}
Consider $\mathcal{E}=\{E_i\}_{i=1}^N$ a family of self-adjoint
operators on $\mathcal{H} \simeq \mathbb C^d$. Then, we have
\begin{equation}\label{eq:simplex-bound}
\max_{i\neq j}\langle E_i,E_j \rangle\geq \frac{d^{-1}(\sum_{j=1}^N\mathrm{Tr}[E_j])^2-\sum_{j=1}^N\langle E_j,E_j \rangle}{N(N-1)},
\end{equation}
with equality iff $\mathcal{E}$ is equiangular and $\sum_{i=1}^N E_i = \left( d^{-1}\sum_{i=1}^N \mathrm{Tr}[E_i]\right) I_d$.
\end{proposition}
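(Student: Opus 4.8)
The plan is to prove the inequality \eqref{eq:simplex-bound} by bounding the Hilbert--Schmidt norm of the sum $S := \sum_{i=1}^N E_i$ from below in two ways, comparing a ``diagonal'' estimate with a trace estimate. First I would expand
$$\|S\|_{HS}^2 = \operatorname{Tr}[S^2] = \sum_{i=1}^N \langle E_i, E_i \rangle + \sum_{i \neq j} \langle E_i, E_j \rangle,$$
which expresses everything in terms of the quantities appearing in the statement. The right-hand side of \eqref{eq:simplex-bound} suggests comparing $\operatorname{Tr}[S^2]$ with $(\operatorname{Tr}[S])^2 / d$; indeed, since $S$ is self-adjoint acting on $\mathbb C^d$, the Cauchy--Schwarz inequality applied to its eigenvalues (equivalently, $\operatorname{Tr}[S^2] \geq (\operatorname{Tr} S)^2 / d$, the bound between the Hilbert--Schmidt and the trace functional) gives
$$\sum_{i=1}^N \langle E_i, E_i \rangle + \sum_{i \neq j} \langle E_i, E_j \rangle = \operatorname{Tr}[S^2] \geq \frac{(\operatorname{Tr} S)^2}{d} = \frac{1}{d}\Bigl(\sum_{j=1}^N \operatorname{Tr}[E_j]\Bigr)^2.$$

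Rearranging this yields a lower bound on the off-diagonal sum $\sum_{i\neq j} \langle E_i, E_j\rangle$, namely $\sum_{i \neq j}\langle E_i, E_j\rangle \geq d^{-1}(\sum_j \operatorname{Tr}[E_j])^2 - \sum_j \langle E_j, E_j\rangle$. Since there are exactly $N(N-1)$ ordered off-diagonal pairs, I would then invoke the elementary fact that the maximum of a finite collection is at least its average:
$$\max_{i \neq j}\langle E_i, E_j\rangle \geq \frac{1}{N(N-1)}\sum_{i \neq j}\langle E_i, E_j\rangle,$$
which combined with the previous display gives precisely \eqref{eq:simplex-bound}.

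For the equality case I would trace back through the two inequalities used. Equality in the averaging step forces all off-diagonal inner products $\langle E_i, E_j\rangle$ ($i \neq j$) to be equal, which is exactly the definition of an \emph{equiangular} family. Equality in the trace--norm inequality $\operatorname{Tr}[S^2] \geq (\operatorname{Tr} S)^2/d$ holds if and only if all eigenvalues of $S$ coincide, i.e.~$S$ is a scalar multiple of the identity; matching traces fixes the scalar, giving $\sum_i E_i = (d^{-1}\sum_i \operatorname{Tr}[E_i]) I_d$. Conversely, both conditions together make every inequality an equality, so the stated characterization holds. The main point requiring a little care is articulating the equality condition for $\operatorname{Tr}[S^2] = (\operatorname{Tr} S)^2/d$: this is the equality case of Cauchy--Schwarz between the eigenvalue vector of $S$ and the all-ones vector, which occurs precisely when the eigenvalues are constant; I would state this cleanly rather than leaving it implicit, since it is what pins down the scalar-multiple-of-identity conclusion.
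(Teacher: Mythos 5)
Your proof is correct and follows essentially the same route as the paper: expand $\operatorname{Tr}[(\sum_i E_i)^2]$ to isolate the off-diagonal sum, apply $\operatorname{Tr}[S^2]\geq(\operatorname{Tr} S)^2/d$, bound the maximum by the average, and trace equality back through both inequalities. Your explicit articulation of the equality case of the trace--norm inequality is a slight improvement in clarity over the paper's terser treatment, but the argument is the same.
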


\begin{proof}
We have
\begin{align}
\label{eq:sum-different-indices-1} \sum_{\substack{i,j=1 \\ i\neq j}}^N \langle E_i,E_j \rangle &= \left\langle\sum_{i=1}^N E_i,\sum_{i=1}^N E_i \right\rangle - \sum_{i=1}^N \langle E_i, E_i \rangle\\
\label{eq:sum-different-indices-2}  &=\mathrm{Tr}\left[\sum_{i=1}^N E_i \right]^2-\sum_{i=1}^N \langle E_i, E_i \rangle\geq \frac{1}{d}\left(\sum_{i=1}^N \mathrm{Tr} [E_i]\right)^2-\sum_{i=1}^N \langle E_i, E_i \rangle
\end{align}
Using the fact that the maximum of all the terms in the LHS of the equation above is larger than the average term, \eqref{eq:simplex-bound} follows.
We have equality iff all the terms are equal, and thus the family $\mathcal E$ is equiangular. In this case, from the equality case in the Cauchy-Schwarz inequality, we have $\sum_{j=1}^N E_j=\lambda I_d$. But $\sum_{i=1}^N \mathrm{Tr} [E_i]=\lambda \mathrm{Tr} [I_d]$, and thus $\lambda = d^{-1}\sum_{i=1}^N \mathrm{Tr} [E_i]$.
\end{proof}

\begin{remark} If $\sum_{i=1}^N \mathrm{Tr} [E_i]=d$ and $\lambda=1$, we recover the statement of \cite[Proposition 4.1]{cahill}.
\end{remark}

In the following we give some extensions and generalizations of a result from \cite{wol}. We use an idea from \cite{bachoc}, which requires to introduce scalar weights $v_i$.  

\begin{proposition}\label{prop:Welch-weights}
Consider $\mathcal{E}=\{E_i\}_{i=1}^N$ a family of positive operators on $\mathcal{H} \simeq \mathbb C^d$, so that $\mathrm{Tr} [E_i^2]=1$, for all $1 \leq i \leq N$. Then, for any positive weights $v_1,\ldots, v_N >0$, we have
\begin{eqnarray}
\sum_{\substack{i,j=1 \\ i\neq j}}^N v_iv_j \langle E_i,E_j \rangle \geq d^{-1} \left(\sum_{i=1}^N v_i \right)^2- \sum_{i=1}^N v_i^2,
\end{eqnarray}
with equality iff the $E_i$ are rank-one projections and  $\sum_{i=1}^N v_i E_i= \left( d^{-1}\sum_{i=1}^N v_i \right)I_d$.
\end{proposition}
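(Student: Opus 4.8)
The plan is to mimic the structure of Proposition \ref{prop:Welch}, but with the weights $v_i$ inserted so that the Cauchy--Schwarz step is applied to the weighted sum $\sum_i v_i E_i$ rather than the unweighted one. The key observation is that, because each $E_i$ is normalized so that $\mathrm{Tr}[E_i^2] = \langle E_i, E_i\rangle = 1$, the diagonal contribution to the weighted Gram sum is exactly $\sum_i v_i^2$.

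First I would expand the square of the weighted sum in the Hilbert--Schmidt inner product:
\begin{equation*}
\left\langle \sum_{i=1}^N v_i E_i, \sum_{j=1}^N v_j E_j \right\rangle = \sum_{i=1}^N v_i^2 \langle E_i, E_i\rangle + \sum_{\substack{i,j=1\\ i\neq j}}^N v_i v_j \langle E_i, E_j\rangle = \sum_{i=1}^N v_i^2 + \sum_{\substack{i,j=1\\ i\neq j}}^N v_i v_j \langle E_i, E_j\rangle,
\end{equation*}
where the last equality uses the normalization $\langle E_i, E_i\rangle = 1$. Rearranging isolates the off-diagonal sum I want to bound from below in terms of $\mathrm{Tr}[\sum_i v_i E_i]^2$.

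Second, I would apply the same operator Cauchy--Schwarz inequality used in \eqref{eq:sum-different-indices-2}: for any self-adjoint operator $S$ on $\mathbb C^d$ one has $\mathrm{Tr}[S^2] \geq d^{-1}(\mathrm{Tr}[S])^2$, which is just Cauchy--Schwarz between $S$ and $I_d$. Taking $S = \sum_i v_i E_i$ gives
\begin{equation*}
\left\langle \sum_{i=1}^N v_i E_i, \sum_{j=1}^N v_j E_j \right\rangle = \mathrm{Tr}\left[\sum_{i=1}^N v_i E_i\right]^2 \geq \frac{1}{d}\left(\sum_{i=1}^N v_i \mathrm{Tr}[E_i]\right)^2.
\end{equation*}
Here I must use that each $E_i$ is a positive operator with $\mathrm{Tr}[E_i^2]=1$; to reach the clean right-hand side $d^{-1}(\sum_i v_i)^2$ I need $\mathrm{Tr}[E_i]$ to be effectively controlled. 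Combining the two displays yields the stated inequality once one checks $\mathrm{Tr}[E_i] \geq 1$, or more carefully tracks how the normalization interacts with the trace; this is the step I expect to require the most care.

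The main obstacle is the equality analysis. Equality in the Cauchy--Schwarz step $\mathrm{Tr}[S^2] \geq d^{-1}(\mathrm{Tr}[S])^2$ forces $S = \sum_i v_i E_i$ to be a multiple of $I_d$, and evaluating traces pins the multiple to $d^{-1}\sum_i v_i$, giving the second equality condition. The first equality condition—that the $E_i$ be rank-one projections—should come from saturating the normalization bound $\mathrm{Tr}[E_i]^2 \geq \mathrm{Tr}[E_i^2] = 1$ for positive operators, where equality holds exactly when $E_i$ has rank one; since $\mathrm{Tr}[E_i^2]=1$, a rank-one $E_i$ is then forced to be a genuine projection. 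I would therefore finish by verifying that both saturations are simultaneously necessary, appealing to the positivity of the weights $v_i > 0$ to ensure no cancellation hides a strict inequality in the intermediate steps.
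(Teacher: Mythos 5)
Your proposal is correct and follows essentially the same route as the paper: expand the weighted Gram sum (which is just applying Proposition \ref{prop:Welch}'s computation to $F_j = v_j E_j$), use Cauchy--Schwarz against $I_d$, and then reduce $\bigl(\sum_i v_i \operatorname{Tr}[E_i]\bigr)^2$ to $\bigl(\sum_i v_i\bigr)^2$ via $\operatorname{Tr}[E_i] \geq \sqrt{\operatorname{Tr}[E_i^2]} = 1$ for positive operators, with equality exactly for rank-one projections. The step you flag as delicate is resolved precisely as you suggest, and your equality analysis matches the paper's.
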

\begin{proof}
We apply Proposition \ref{prop:Welch} to the operators $F_j = v_j E_j$; using \eqref{eq:sum-different-indices-1}-\eqref{eq:sum-different-indices-2}, we get
$$\sum_{\substack{i,j=1 \\ i\neq j}}^N v_iv_j\langle E_i,E_j \rangle \geq d^{-1} \left(\sum_{i=1}^N v_i \operatorname{Tr} E_i  \right)^2- \sum_{i=1}^N v_i^2,$$
To conclude, we use $1=\operatorname{Tr}[E_i^2]\leq \operatorname{Tr} E_i$, with equality iff each $E_i$ is a rank-one projector.
\end{proof}

\begin{remark}
In order for the inequality in the statement to be non trivial, the \emph{weight coefficient}
$$[v]:=\frac{\left(\sum_{i=1}^N v_i\right)^2}{\sum_{i=1}^N v_i^2}$$
must satisfy $[v] \geq d$. Note that in general, $[v]$ lies in the interval $[1,N]$. 
\end{remark}

\begin{proposition}\label{prop:Welch-weights-Holder}
Let $\mathcal{E}=\{E_i\}_{i=1}^N$ a family of positive semidefinite operators on $\mathcal{H} \simeq \mathbb C^d$, so that $\mathrm{Tr} [E_i^2]=1$, for all $1 \leq i \leq N$. Then, for any $1 < p < \infty$ and any positive weights $v_1,\ldots, v_N >0$ such that $[v] \geq d$, we have
\begin{equation}
\sum_{1 \leq i \neq j \leq N} v_iv_j \langle E_i, E_j\rangle^p \geq \frac{\left[d^{-1}\left(\sum_i v_i\right)^2- \sum_i  v_i^2 \right]^p}{ \left[\sum_{i\neq j} v_i v_j\right]^{p-1}},
\end{equation}
with equality iff the $E_i$ are equiangular rank-one projections and  $\sum_{i=1}^N v_i E_i= \left( d^{-1}\sum_{i=1}^N v_i \right)I_d$.
\end{proposition}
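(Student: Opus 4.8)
The plan is to deduce this $p$-th power inequality from the first-power version already established in Proposition \ref{prop:Welch-weights} by a single application of Hölder's inequality; this is why I would frame the whole argument around Hölder rather than Jensen, although the two are interchangeable here. The starting observation is that, since each $E_i$ is positive semidefinite, every pairwise inner product $\langle E_i, E_j \rangle = \operatorname{Tr}[E_i E_j]$ is non-negative. Hence all the quantities appearing below live in $[0,\infty)$ and raising them to the power $p$ is a monotone operation, which is what legitimizes the manipulations.

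First I would apply Hölder's inequality over the index set of ordered pairs $(i,j)$ with $i \neq j$, with conjugate exponents $p$ and $q = p/(p-1)$, to the factorization
\begin{equation*}
v_i v_j \langle E_i, E_j \rangle = \left[(v_i v_j)^{1/p}\langle E_i, E_j\rangle\right]\cdot (v_i v_j)^{1/q}.
\end{equation*}
Writing $W := \sum_{i \neq j} v_i v_j$, this yields
\begin{equation*}
\sum_{i \neq j} v_i v_j \langle E_i, E_j \rangle \leq \left(\sum_{i \neq j} v_i v_j \langle E_i, E_j\rangle^p\right)^{1/p} W^{1/q},
\end{equation*}
and, since $p/q = p-1$, raising to the $p$-th power and rearranging gives the key reduction
\begin{equation*}
\sum_{i \neq j} v_i v_j \langle E_i, E_j\rangle^p \geq \frac{\left(\sum_{i \neq j} v_i v_j \langle E_i, E_j \rangle\right)^p}{W^{p-1}}.
\end{equation*}

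Next I would invoke Proposition \ref{prop:Welch-weights} to bound the numerator from below by $d^{-1}(\sum_i v_i)^2 - \sum_i v_i^2$. The hypothesis $[v] \geq d$ is exactly what guarantees this quantity is non-negative, so that, using the monotonicity of $t \mapsto t^p$ on $[0,\infty)$, I may raise the Welch bound to the $p$-th power without reversing the inequality. Substituting into the reduction above produces precisely the claimed estimate.

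The equality analysis is the one step requiring a little care, but it falls out of the two inequalities used. Equality in Hölder (with the strictly convex exponent $p>1$) forces the family $(v_i v_j \langle E_i, E_j\rangle^p)_{i\neq j}$ to be proportional to $(v_i v_j)_{i\neq j}$, i.e.~$\langle E_i, E_j\rangle$ constant across all pairs $i \neq j$, meaning the family is equiangular. Equality in the second step forces equality in Proposition \ref{prop:Welch-weights}, whose stated equality case gives that the $E_i$ are rank-one projections with $\sum_i v_i E_i = (d^{-1}\sum_i v_i) I_d$. Combining the two conditions yields exactly the asserted characterization. I do not anticipate a serious obstacle: the only points to watch are the sign condition $[v]\geq d$, which must be verified before taking $p$-th powers, and the bookkeeping of the two separate equality conditions, so that the equiangularity contributed by the Hölder step is not lost when merging it with the rank-one tight-frame condition inherited from Proposition \ref{prop:Welch-weights}.
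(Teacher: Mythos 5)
Your proposal is correct and follows essentially the same route as the paper: the same Hölder factorization $v_iv_j\langle E_i,E_j\rangle = (v_iv_j)^{1/p}\langle E_i,E_j\rangle\cdot(v_iv_j)^{1/q}$, the same reduction to Proposition \ref{prop:Welch-weights}, and the same treatment of the equality case. Your explicit remarks on why $[v]\geq d$ and the non-negativity of $\langle E_i,E_j\rangle$ legitimize taking $p$-th powers are points the paper leaves implicit, but the argument is identical in substance.
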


\begin{proof}
From H\"older's inequality it follows that 
\begin{align*}
\sum_{i\neq j} v_iv_j \langle E_i, E_j\rangle &=  \sum_{i\neq j} v_i^{\frac 1 p} v_j^{\frac 1 p} \langle E_i, E_j\rangle \cdot v_i^{\frac 1 q} v_j^{\frac 1 q}\\
&\leq 
\left[ \sum_{i\neq j} v_i v_j \langle E_i, E_j\rangle^p \right]^{\frac 1 p} \cdot 
\left[ \sum_{i\neq j} v_i v_j \right]^{\frac 1 q},
\end{align*}
where $p$ and $q$ are conjugate exponents $p^{-1} + q^{-1}=1$. Therefore, using Proposition \ref{prop:Welch-weights}, we have
\begin{align*}
\sum_{i\neq j} v_iv_j \langle E_i,E_j \rangle^p &\geq \frac{\left[ \sum_{i\neq j} v_iv_j \langle E_i,E_j \rangle \right]^p}{\left[ \sum_{i\neq j} v_i v_j \right]^{p-1}}\\\nonumber 
&\geq\frac{\left[d^{-1}\left(\sum_i v_i\right)^2- \sum_i  v_i^2 \right]^p}{ \left[\sum_{i\neq j} v_i v_j\right]^{p-1}}.
\end{align*}
H\"older's inequality is saturated iff $v_i v_j \langle E_i, E_j\rangle^p = \lambda v_iv_j$ for all $i \neq j$, i.e.~iff the family $\mathcal E$ is equiangular.

\end{proof}

\begin{remark} With the choice  $v_1 = \cdots v_N = 1$ and $p=2$, Proposition \ref{prop:Welch-weights-Holder} gives the result from \cite[Proposition 2.7 (SIC--POVMs)]{wol}:
$$\sum_{j\neq k}\langle E_j,E_k \rangle^2 \geq \frac{N(N-d)^2}{(N-1)d^2}.$$
\end{remark} 

Either from Proposition \ref{prop:Welch-weights} or from Proposition \ref{prop:Welch-weights-Holder}, one obtains the following weight generalization of the simplex bound. 

\begin{corollary}
Let $\mathcal{E}=\{E_i\}_{i=1}^N$ a family of positive semidefinite operators on $\mathcal{H} \simeq \mathbb C^d$, so that $\mathrm{Tr} [E_i^2]=1$, for all $1 \leq i \leq N$. Then, for any $1 < p < \infty$ and any positive weights $v_1,\ldots, v_N >0$ such that $[v] \geq d$, we have
\begin{equation}\label{eq:min-angle-weights}
\max_{1 \leq i \neq j \leq N} \langle E_i, E_j \rangle \geq \frac{N-d}{d(N-1)} \geq \frac{d^{-1}\left(\sum_i v_i\right)^2- \sum_i  v_i^2}{\sum_{i\neq j} v_i v_j},
\end{equation}
with equality iff the $E_i$ are equiangular rank-one projections and  $\sum_{i=1}^N v_i E_i= \left( d^{-1}\sum_{i=1}^N v_i \right)I_d$.
\end{corollary}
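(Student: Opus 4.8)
The plan is to prove the two inequalities in \eqref{eq:min-angle-weights} separately and then match up their equality cases. For the left-hand inequality, I would apply Proposition \ref{prop:Welch-weights} with the uniform weights $v_1 = \cdots = v_N = 1$, obtaining
\[
\sum_{i\neq j}\langle E_i, E_j\rangle \geq d^{-1}N^2 - N = \frac{N(N-d)}{d}.
\]
Since this is a sum of $N(N-1)$ terms, the largest term dominates the average, which yields $\max_{i\neq j}\langle E_i, E_j\rangle \geq \frac{N-d}{d(N-1)}$ at once (and if $N<d$ this bound is non-positive, hence trivially dominated by the non-negative left-hand side).

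For the right-hand inequality, I would set $c := [v] = (\sum_i v_i)^2/(\sum_i v_i^2)$ and use the identity $\sum_{i\neq j} v_i v_j = (\sum_i v_i)^2 - \sum_i v_i^2$ to rewrite the weighted bound as
\[
\frac{d^{-1}(\sum_i v_i)^2 - \sum_i v_i^2}{\sum_{i\neq j} v_i v_j} = g(c), \qquad g(c) := \frac{c-d}{d(c-1)} = \frac 1 d \left(1 - \frac{d-1}{c-1}\right).
\]
Since $d \geq 1$, the function $g$ is non-decreasing on $(1,\infty)$, and the Cauchy--Schwarz inequality gives $c = [v] \leq N$; hence $g(c) \leq g(N) = \frac{N-d}{d(N-1)}$, which is exactly the second inequality.

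For the equality discussion I would argue that the full chain is tight precisely when both individual inequalities are. Tightness on the left forces, through the max-equals-average step, that $\mathcal E$ is equiangular, and through the equality case of Proposition \ref{prop:Welch-weights} (applied with $v_i=1$) that the $E_i$ are rank-one projections forming a tight frame $\sum_i E_i = (N/d)I_d$. Tightness on the right forces $g(c) = g(N)$, i.e.\ $c = [v] = N$, which by the equality case of Cauchy--Schwarz means all the weights coincide; combined with the tight-frame property this is exactly $\sum_i v_i E_i = d^{-1}(\sum_i v_i)I_d$.

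The step I expect to require the most care is the converse, namely checking that the stated conditions---equiangular rank-one projections together with $\sum_i v_i E_i = d^{-1}(\sum_i v_i)I_d$---already force the whole chain to be tight, without separately assuming uniform weights. The mechanism is a two-way expansion of $\|\sum_i v_i E_i\|_{HS}^2 = d^{-1}(\sum_i v_i)^2$: it expresses the common angle $\gamma$ as exactly the weighted bound $g([v])$. Inserting this into the two inequalities already proven sandwiches $\gamma$ between $\frac{N-d}{d(N-1)}$ and $g([v]) = \gamma$, collapsing both to $\frac{N-d}{d(N-1)}$; in particular $g([v]) = g(N)$ then forces $[v]=N$, so the weights are automatically uniform and the equivalence closes.
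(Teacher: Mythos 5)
Your proof is correct, and for the crucial second inequality it is the paper's idea in a cleaner form. The paper normalizes the weights to a probability vector and runs a smoothing argument: transferring a small amount of mass from a larger weight to a smaller one increases the right-hand side, so by a majorization argument the bound is maximal at flat weights. You instead notice that the weighted bound depends on $v$ only through the single scalar $c=[v]$, equals $g(c)=\frac{c-d}{d(c-1)}=\frac{1}{d}\bigl(1-\frac{d-1}{c-1}\bigr)$, which is non-decreasing on $(1,\infty)$, and you finish with $[v]\le N$ from Cauchy--Schwarz. This is the same monotonicity-in-flatness phenomenon, but made explicit; what it buys you is an exact description of when the second inequality is tight ($[v]=N$, hence uniform weights, for $d\ge 2$). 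Your write-up also goes beyond the paper's: the published proof only establishes that the right-hand side is maximized at equal weights, leaving the first inequality (which you correctly obtain from Proposition \ref{prop:Welch-weights} with $v_i\equiv 1$ plus max-versus-average) and the whole equality discussion implicit. In particular, your converse computation --- expanding $\|\sum_i v_iE_i\|_{HS}^2$ to show that equiangular rank-one projections with $\sum_i v_iE_i=d^{-1}(\sum_i v_i)I_d$ force the common angle to equal $g([v])$, which sandwiches the chain and incidentally forces $[v]=N$ --- is a genuine addition, since the stated equality condition does not mention uniform weights and it is not obvious a priori that it is incompatible with non-uniform ones. The only caveats are degenerate ones the paper also ignores ($d=1$, where $g$ is constant, and $N=1$).
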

\begin{proof}
Note that the left hand side of \eqref{eq:min-angle-weights} does not depend on the weights $v$, so we just need to show that the right hand side is maximal when all the weights are equal. Using the homogeneity of the expression, we can assume $\sum_i v_i = 1$, i.e.~$v$ is a probability vector. Replacing two components $v_i > v_j$ of $v$ with $v_i - \varepsilon$ and respectively $v_j + \varepsilon$, for $\varepsilon>0$ small enough, we see that the bound increases, so the maximum must be achieved by ``flat'' weights $v$ (see \cite[Theorem II.1.10]{bha} for the related concept of majorization).
\end{proof}


\begin{thebibliography}{99}

\bibitem{appleby} Appleby, D. {\it Symmetric complete measurements of arbitary rank } Optics and Spectroscopy, 103, 416-428 (2007).

\bibitem{bachoc} Bachoc, C. and Ehler, M. {\it Tight p-fusion frames} Appl. Comput. Harmon. Anal. 35(1), 1-15 (2013).

\bibitem{bha}
Bhatia, R. 
{\it Matrix Analysis.} 
Graduate Texts in Mathematics, 169. Springer-Verlag, New York (1997).

\bibitem{cahill} Cahill J, Casazza, P.G., Ehler, M. and Li, S. {\it Tight and random nonorthogonal fusion frames}, Trends in Harmonic Analysis and its Applications, Contemporary Mathematics, vol 650 (2015).    

\bibitem{cku}
Casazza, P.G., Kutyniok, G.
{\it Finite Frames. Theory and Applications.}
Birkhäuser Boston, 2013.

\bibitem{casazza} Casazza, P.G. and Leon, M.T. 
{\it Existance and construction of finite frames with a given system frame operator} Intern. J. Pure and Appl. Math. 63(2), 149-157 (2010).

\bibitem{choi} Choi, M.D. and Wu, P.Y. {\it Sum of orthogonal projections}  J. Funct. Analysis, 267, 384-404 (2014).

\bibitem{chi} Christensen O. {\it An Introduction to Frames and Riesz Bases } Birkh\"auser (2003).

\bibitem{chs}
Conway, J. H., Hardin, R. H., Sloane, N. J.
{\it Packing lines, planes, etc.: Packings in Grassmannian spaces}
Experimental mathematics, 5(2), 139--159 (1996).

\bibitem{fillmore} Fillmore, P.A. {\it On sums of projections}  J. Funct. Analysis 4, 146-152 (1969).

\bibitem{gour} Gour, G. and Kalev, A. {\it Construction of all general symmetric informationally complete measurements } J. Phys. A: Math. Theor. 47, 335302 (2014).

\bibitem{gupta} Gupta, V.P., Mandayam P. and Sunder, V.S. {\it The function analysis of quantum information theory}  A collection of notes based on lectures by Gilles Pisier, K.R. Parthasarathy, Vern Paulsen and Andreas Winter, Springer (2015).

\bibitem{hklw}
Han, D., Kornelson, K., Larson, D., Weber, E.
{\it Frames for Undergraduates.}
Student Mathematical Library, vol. 40. Am. Math. Soc., Providence (2007).

\bibitem{hjo}
Horn, R. and Johnson, C.
{\it Matrix analysis.}
2nd edition, Cambridge University Press (2013).

\bibitem{kpcl}
Kutyniok, G., Pezeshki, A., Calderbank, R., Liu, T.
{\it Robust dimension reduction, fusion frames, and Grassmannian packings.}
Applied and Computational Harmonic Analysis, 26(1), 64--76 (2009).

\bibitem{teiko-book}
Heinosaari, T. and  Ziman,  M.
{\it The mathematical language of quantum  theory.}
Cambridge University Press (2012).

\bibitem{rabanovich} Kruglyak, S., Rabanovich, V. and Samoilenko, Y.{\it Decomposition of a scalar matrix into a sum of orthogonal projections}, Linear Algebra and its Applications, 370, 217-225 (2003).

\bibitem{niculescu} Niculescu, C. {\it Converses of the Chauchy-Schwartz inequality in the C*-framework},  Analele Univ. Craiova, seria Matematic\u{a}-Informatica 25, 22-28 (1999).

\bibitem{scott} Scott, A. and Grassl, M.  {\it SIC--POVMs: A new computer study} J. Math. Phys. 51, 042203  (2010).

\bibitem{welch} Welch, L. R., {\it Lower bounds on the maximum cross correlation of signals}  IEEE Transactions on Information Theory. 20 (3): 397–9  (1974).

\bibitem{wol}
Wolf, M. 
{\it Quantum channels \& operations: Guided tour}
Lecture notes available  \href{http://www-m5.ma.tum.de/foswiki/pub/M5/Allgemeines/MichaelWolf/QChannelLecture.pdf}{online} (2012).

\end{thebibliography}
\end{document}